\theoremstyle{plain}
\newtheorem{thm}{Theorem}[section]
\newtheorem{lem}{Lemma}[section]
\newtheorem{definition}{Definition}[section]
\newtheorem{prop}{Proposition}[section]
\newtheorem{cor}{Corollary}[section]
\newtheorem{corollary}{Corollary}[section]
\theoremstyle{remark}
\newtheorem{rem}{Remark}[section]
\newtheorem{question}{Question}[section]
\numberwithin{equation}{section}
\def\ens{\ensuremath}                                     
\def\bydef{\ens{\stackrel{\text{def}}{=}}}
\newcommand\mtb[1]{\ens{\mathbb{#1}}}                     
\newcommand{\N}{\mtb{N}}	   \newcommand{\Q}{\mtb{Q}}   	\newcommand{\R}{\mtb{R}}	
\newcommand{\T}{\mtb{T}}	   \newcommand{\Z}{\mtb{Z}}
\newcommand\mtc[1]{\ens{\mathcal{#1}}}           
\newcommand{\MM}{\mtc{P}}                	
\newcommand{\alp}{\ens{\alpha}}  \newcommand{\bet}{\ens{\beta}}
  \newcommand{\eps}{\ens{\varepsilon}}
	\newcommand{\lam}{\ens{\lambda}}
\newcommand\bld[1]{\ens{\boldsymbol{#1}}} 
\newcommand{\bfx}{\ens{\bld x}}     \newcommand{\bfy}{{\ens{\bld y}}}   
\newcommand{\bfu}{{\ens{\bld u}}}   \newcommand{\bfr}{{\ens{\bld r}}}
\newcommand{\bfz}{{\ens{\bld z}}}   
\newcommand\uv[2]{\scalebox{#1}{#2}} 
\newcommand\uvm[2]{\scalebox{#1}{\ens{#2}}} 
\newcommand{\card}{{\bf card}}                       
\newcommand{\hdim}{{\uvm{.95}{\rm{dim_{\uv{.65}{\rm H}}}}}}   
\newcommand{\dist}{{\bf dist}}                       
\newcommand{\fractional}[1]{\langle#1\rangle}
\newcommand\hl[1]{{\center \color{red} \ens{\bs\bs\bs} \\}} 
\newcommand\hsp[1]{\mbox{}\hspace{#1mm}} 
\newcommand\vsp[1]{\par \vspace{#1mm}} 
\newcommand\vspm[1]{\par \vspace{-#1mm}} 
\def\bs{{\bigstar}}                     
\newcommand\mbc[1]{}                                                    
\newcommand{\h}{\hsp}
\newcommand{\minus}{\!\setminus\!}
\renewcommand{\ni}{\noindent}
\def\B{\ens{\mtb B}} 
\def\DIS{\mathrm{DIS}}
\def\DEN{\mathrm{DEN}}
\def\e{\mathrm{E}}
\def\E{\mathrm{E}}
\def\PB{\mathrm{PB}}
\def\PUB{\mathrm{PUB}}
\def\PE{\mathrm{PE}}
\def\ND{\mathrm{ND}}
\def\BA{\mathrm{BA}}
\def\diam{\mathbf{diam}}
\def\un1{\,\cup\,}
\def\dimh{\mathrm{DIM_{h}}}
\def\hd{\,\mathrm{D_H}}   \def\dh{\,\mathrm{D_H}}
\def\NN{\mtc N}    
\title[A dichotomy for projections]
{A dichotomy for projections of planar sets}
\author[M. Boshernitzan {}]{Michael Boshernitzan {}}
\address{Department of Mathematics, Rice University, Houston, TX~77005, USA}
\email{michael@rice.edu}
\thanks{The author was supported in part by research grant: NSF-DMS-1102298}
\subjclass{Primary  11Kxx, 28Axx. Secondary 11Jxx, 37Axx}
\date{November 22, 2011}
\keywords{Projections of sets. Measure theory. Distribution mod 1}
\begin{document}
\maketitle
\begin{abstract}
We prove that most one-dimensional projections of a discrete subset of $\R^2$
are either dense in~$\R$, or form a discrete subset of $\R$. More precisely,
the set $\E$ of exceptional directions (for which the indicated dichotomy fails)
is a meager subset (of the unit circle  $\T$) of Lebesgue measure 0. 
The set $\E$  however does not need to be small in the sense of Hausdorff dimension. 
\end{abstract}

\section{Main Results.}
For $n\geq1$ and  $\bfx,\bfy\in\R^{n}$, denote by $\bfx\cdot\bfy=\sum_{k=1}^n x_ky_k$ 
the standard inner product in $\R^n$, so that $||\bfx||=\sqrt{\bfx\cdot\bfx}$, 
for $\bfx\in\R^n$. 

For $r\geq0$,  denote by $\B^{n}(r)=\{\bfx\in\R^n\mid ||\bfx||\leq r\}$  the closed 
ball of radius $r$  with the center at the origin.  A set  $M\subset\R^n$ is called 
discrete if\ the intersections  $M\cap \B^{n}(r)$ are finite for all $r>0$.

For $\alp\in\R$, denote ${\bfu}_{\alp}=(\cos \alp,\sin \alp)\in S^{1}$ where 
$S^{1}=\big\{\bfx\in\R^{2} \ \big|\ ||\bfx||=1\big\}$ stands for the unit circle. 
A point in $\bfx\in S^{1}$  is determined by its direction $\alp\in \T=[0,2\pi)$ 
(so that $\bfx=\bfu_{\alp}$).

Denote by $\Phi_\alp$ the projection map \ $\Phi_{\alp}\colon\,\R^{2}\to\R$ \
defined by the formula
\begin{equation}\label{eq:proj}
\Phi_{\alp}(\bfx)=\bfx\cdot \bfu_\alp=x_1\cos \alp+x_2\sin\alp,
   \qquad  \bfx=(x_{1},x_{2})\in\R^{2}.
\end{equation}
\begin{definition}\label{def:3sets}
\em For a set $M\subset\R^{2}$, define the following three subsets of the set\, 
$\T=[0,2\pi)$ (of directions):
\begin{enumerate}
\item[(1)] the set of {\em dense} directions for $M$:
\[
\DEN(M)=\big\{\alp\in\T\ \big|  \text{ the set }\ 
        \Phi_{\alp}(M) \text{ is dense in } \R\big\},
\]
\item[(2)] the set of {\em discrete} directions for $M$:
\[
\DIS(M)=\big\{\alp\in\T\ \big| \
        \Phi_{\alp}(M) \text{ is a discrete subset in } \R\big\},
\]
\item[(3)] the set of {\em exceptional} directions for $M$:
\[
\E(M)=\T\minus\big(\DEN(M) \un1 \DIS(M)\big).
\]
\end{enumerate}
\end{definition}
Thus, for every  $M\subset\R^{2}$,                     \mbc{eq:part}
\begin{equation}\label{eq:part}
       \T=\DEN(M) \un1 \DIS(M) \un1 \E(M)
\end{equation}
is a partition of the set  $\T$  of all directions into three distinct subsets.

The central result of the paper is given by the following theorem.             \mbc{thm:cent}
\begin{thm}\label{thm:cent}
For every discrete subset $M\subset\R^{2}$, the set of exceptional directions
$\E(M)$ is a meager subset of\, $\T$  of Lebesgue measure $0$.
\end{thm}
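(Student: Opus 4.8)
Throughout, for a point $\bfx\in\R^2$ and a direction $\alp$ write $p=\Phi_\alp(\bfx)$ and $q=\Phi_{\alp+\pi/2}(\bfx)$ for the two orthogonal coordinates of $\bfx$ in the frame $(\bfu_\alp,\bfu_{\alp+\pi/2})$, so that for a nearby direction $\alp+t$ one has the exact identity $\Phi_{\alp+t}(\bfx)=p\cos t+q\sin t$. If $M$ is finite every projection is finite, hence discrete, and $\E(M)=\emptyset$; so assume $M$ infinite. The starting point is the reformulation: $\alp\in\E(M)$ iff $\Phi_\alp(M)$ has a finite accumulation point (failure of discreteness) and omits some nonempty open interval (failure of density). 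Since $M$ is discrete, an accumulation point is produced by distinct $\bfx_k\in M$ with $p_k=\Phi_\alp(\bfx_k)\to c$ and $|q_k|=|\Phi_{\alp+\pi/2}(\bfx_k)|\to\infty$; geometrically the $\bfx_k$ escape to infinity while converging to the line perpendicular to $\bfu_\alp$ at signed height $c$, while a gap $(a,b)$ means $M$ omits the open slab $\{\,a<\Phi_\alp<b\,\}$. Covering the possible gaps by rational intervals, $\E(M)\subseteq\bigcup_{a<b,\ a,b\in\Q}(C_{a,b}\cap\ND)$, where $C_{a,b}=\{\alp:\Phi_\alp(M)\cap(a,b)=\emptyset\}=\bigcap_{\bfx\in M}\{\alp:\Phi_\alp(\bfx)\notin(a,b)\}$ is closed (an intersection of closed conditions) and $\ND=\{\alp:\Phi_\alp(M)\text{ is not discrete}\}$.

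For meagerness I will show each $C_{a,b}\cap\ND$ is nowhere dense. If $\overline{C_{a,b}\cap\ND}$ had an interior point, that point would lie in the closed set $C_{a,b}$, so an open arc $I\subseteq C_{a,b}$ would meet $C_{a,b}\cap\ND$ densely; pick an interior $\alp_0\in I\cap\ND$ and its escaping accumulation sequence $\bfx_k$ with $p_k\to c$, $|q_k|\to\infty$. By the identity above, $\Phi_{\alp_0+t}(\bfx_k)=p_k\cos t+q_k\sin t$ equals $p_k\approx c$ at $t=0$ with derivative $q_k$ there, so there is $t_k$ with $|t_k|\lesssim 1/|q_k|\to 0$ and $\Phi_{\alp_0+t_k}(\bfx_k)=\tfrac{a+b}{2}\in(a,b)$. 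For $k$ large, $\alp_0+t_k\in I\subseteq C_{a,b}$, yet $\bfx_k\in M$ has projection inside the forbidden slab at that direction, contradicting $\alp_0+t_k\in C_{a,b}$. Hence each $C_{a,b}\cap\ND$ is nowhere dense and $\E(M)$ is meager.

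For the null statement I will run the same sweeping quantitatively against the Lebesgue density theorem. Writing $\ND=\bigcup_{R\in\N}\ND_R$ with $\ND_R=\{\alp:\Phi_\alp(M)\cap[-R,R]\text{ infinite}\}$, it suffices to show each $C_{a,b}\cap\ND_R$ is null; suppose not and let $\alp_0$ be a density point of $K=C_{a,b}\cap\ND_R$. The strip $\{|\Phi_{\alp_0}|\le R\}$ contains infinitely many $\bfx_k\in M$ with $|p_k|\le R$, $|q_k|\to\infty$, and as above each yields an interval $J_k$ of directions on which $\bfx_k$ is swept into $(a,b)$, with $J_k$ at distance $\asymp 1/|q_k|$ from $\alp_0$ and length $\asymp (b-a)/|q_k|$, i.e.\ occupying a fixed proportion of its own scale about $\alp_0$. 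On $K$ the gap is empty, so $K$ must avoid $\bigcup_k J_k$; the density-point property then forces $\bigcup_k J_k$ to have density $0$ at $\alp_0$, which by the scale-invariant sizing can only happen if the radii $|q_k|$ are lacunary (they skip almost all dyadic scales). It remains to rule this out: a lacunary family is destroyed by any nonzero rotation (its projections $p_k\cos t+q_k\sin t\to\pm\infty$ for fixed $t\ne 0$), so for nearby directions the accumulation inside $[-R,R]$ must be supplied by other points of $M$; quantifying this restores non-lacunarity at a positive proportion of scales, and feeding that back into the covering contradicts density $0$. Hence $K$ has no density point and is null, so $\E(M)$ is null.

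The main obstacle is precisely this last balancing act. Meagerness and the geometric reduction are soft, but the null statement must reconcile two opposite degeneracies: when the escaping family is non-lacunary the swept intervals $J_k$ cover a definite proportion of every small scale and destroy the density of $C_{a,b}$, whereas when it is lacunary the covering is too thin and one must instead exploit the fragility of accumulation itself to show that $\ND_R$ cannot be thick near $\alp_0$. Controlling this crossover uniformly in $a,b,R$ is the crux, and its genuine two-sidedness is exactly what leaves room for $\E(M)$ to carry full Hausdorff dimension while still being both meager and of measure $0$.
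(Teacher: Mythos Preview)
Your decomposition and the geometric sweeping idea are exactly what the paper does: it writes
\[
\E'(M)=\bigcup_{P,Q\in\Sigma} V(P,Q),\qquad
V(P,Q)=\big\{\beta:\ \Psi_\beta(M)\cap P\text{ infinite},\ \Psi_\beta(M)\cap Q=\emptyset\big\},
\]
which is your $C_{a,b}\cap\ND_R$ with $Q=(a,b)$ and $P=[-R,R]$. Your meagerness argument is correct and matches the paper's.

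The gap is in your measure-zero step, and it is a gap of \emph{overthinking}, not a missing idea. You correctly produce, for a putative density point $\alpha_0$ of $K=C_{a,b}\cap\ND_R$, intervals $J_k$ with $K\cap J_k=\emptyset$, $|J_k|\asymp(b-a)/|q_k|$, and $\operatorname{dist}(\alpha_0,J_k)\le C(R,a,b)/|q_k|$. That is already the whole argument: take $I_k$ centered at $\alpha_0$ of radius $C'(R,a,b)/|q_k|$ large enough to contain $J_k$; then
\[
\frac{\lambda(K\cap I_k)}{\lambda(I_k)}\ \le\ 1-\frac{\lambda(J_k)}{\lambda(I_k)}\ \le\ 1-\eta(R,a,b)<1
\]
for a fixed $\eta>0$ independent of $k$, while $\lambda(I_k)\to0$. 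This contradicts the Lebesgue density theorem directly; no statement about the density of $\bigcup_k J_k$ at $\alpha_0$, no lacunarity dichotomy, and no ``crossover'' analysis is needed. The paper does precisely this (with $I_k=(\beta-d_k,\beta+d_k)$ and the constant ratio $q/(2d)$), and in fact uses the same observation to get nowhere-density and measure zero in one stroke. Your final paragraph trying to rule out lacunary $|q_k|$ is both unnecessary and, as written, not an argument; drop it and the proof is complete.
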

In other words,  the above theorem claims that ``most`` directions, in both metric 
and topological senses, are either discrete or dense.                          \mbc{rem:borel}

\begin{rem}\label{rem:borel}
One easily verifies that, for any (not necessarily discrete) subset $M\subset\R^{2}$,
the sets $\DEN(M)$, $\DIS(M)$ and $\E(M)$ introduced in Definition \ref{def:3sets}
are Borel (see Corollary \ref{cor:borel} in the next section).
\end{rem}
\ni{\bf Notation.} \ Through this paper, the following notation is used:
                       
\h3{\bf 1. }\makebox[24mm][l]{$\dimh(X)$}\h3 
             stands for the Hausdorff dimension of a set $X\in\T$;\vspm{.5}
             
\h3{\bf 2. }\makebox[24mm][l]{$\lam(X)$}\h3 
             stands for the Lebesgue measure of a set $X\in\T$;\vspm{.5}

\h3{\bf 3. }\makebox[24mm][l]{$\card(X)\leq\omega$}\h3 
             means that $X$ is at most countable.\vspm{.5}

\h3{\bf 4. }\makebox[24mm][l]{$X\triangle Y$}\h3 stands for the symmetric 
             difference $(X\minus Y)\cup(Y\minus X)$ of sets $X$ and $Y$;\vspm{.5}
           
\h3{\bf 5. }\makebox[24mm][l]{$X\equiv Y\!\pmod \omega$}\h3 means that\, 
            $\card(X\triangle Y)\leq\omega$  
            (i.\,e., $X$ and $Y$ differ by at most a countable set);\vspm{.5}
           
\h3{\bf 6. }\makebox[24mm][l]{$X\equiv Y\pmod \lam$}\h3 
           means that \ $\lam(X\triangle Y)=0$ \h{5}
           (i.\,e., $X$ and $Y$ differ by a set of measure $0$).

\begin{rem}\label{rem:hd}
We shall see that there are discrete subsets $M\subset\R^{2}$  with $\dimh(\E(M))=1$
(i.\,e., the exceptional set of direction may have full Hausdorff dimension
even though  $\lam(\E(M))=0$\, by Theorem \ref{thm:cent}). 
Some examples of such $M$ are given by Propositions   \ref{prop:difsqrs}
and \ref{prop:exmr}.
\end{rem}
Theorem \ref{thm:cent} admits a generalization for arbitrary (not necessarily discrete) 
subsets $M\subset\R^{2}$. 
This genera\-li\-zation is given by Theorem~\ref{thm:cent2} below.
We need the following definition.
\begin{definition}\label{def:3bsets}
   \em For a subset $M\subset\R^2$, define the following three sets:
\begin{itemize}
\item[(1)] the set\, $\PB(M)$ of {\em P-bounded} directions for $M$:
\begin{align*}
\PB(M)=\Big\{\alp\in\T\ \Big|\ (\Phi_\alp)^{-1}(J)\cap M\, 
    &\text{ is bounded in }\ \R^2, \\[-1mm]
&\text{ for every bounded subset } J\subset\R\Big\},
\end{align*}
\item[(2)] the set $\PUB$ of {\em P-unbounded} directions for $M$:
\begin{align*}
\PUB(M)=\Big\{\alp\in\T\ \Big|\ (\Phi_\alp)^{-1}(J)\cap M\, 
         &\text{ is unbounded in }\ \R^2,   \\[-1mm]
&\text{ for every open non-empty subset } J\subset\R\Big\},
\end{align*}
\item[(3)] the set\, $\PE$ of {\em P-exceptional} directions for $M$:
\begin{equation}\label{eq:pem}
\PE(M)=\T\minus\big(\PB(M)\un1 \PUB(M)\big).
\end{equation}
\end{itemize}
\end{definition}
\vsp1
\begin{thm}\label{thm:cent2}
For every subset  $M\subset\R^2$,  the set\, $\PE(M)$  (of $P$-exceptional directions)
is a meager set of Lebesgue measure 0.
\end{thm}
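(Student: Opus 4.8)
The plan is to first translate the three P\nobreakdash-conditions into a single statement about a \emph{limit set at infinity}. For $\alp\in\T$ put
\[
\Lambda_\alp=\bigcap_{T>0}\overline{\Phi_\alp\big(M\minus\B^{2}(T)\big)},
\]
the set of finite accumulation points of the projected values $\Phi_\alp(\bfx)$ as $\bfx$ runs through $M$ with $||\bfx||\to\infty$. Using that $||\bfx||\to\infty$ together with $\Phi_\alp(\bfx)$ bounded forces $|\Phi_{\alp+\pi/2}(\bfx)|\to\infty$, one checks directly the dictionary
\[
\alp\in\PB(M)\iff\Lambda_\alp=\emptyset,\qquad \alp\in\PUB(M)\iff\Lambda_\alp=\R ,
\]
so that $\PE(M)=\{\alp\in\T\mid\emptyset\neq\Lambda_\alp\neq\R\}$. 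Since $\{\alp\mid\Lambda_\alp\cap I\neq\emptyset\}$ is a $G_\delta$ for each open $I$, all the sets below are Borel.

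As $\Lambda_\alp$ is closed, $\alp\in\PE(M)$ exactly when $\Lambda_\alp$ meets a closed rational window $W$ and misses a closed rational interval $I$ with positive distance $W\cap I=\emptyset$; and $\Lambda_\alp\cap I=\emptyset$ yields an $N$ with no $\bfx\in M$ of norm $>N$ having $\Phi_\alp(\bfx)\in I$. Hence $\PE(M)\subseteq\bigcup_{W,I,N}S^{N}_{W,I}$, a countable union where
\[
S^{N}_{W,I}=\{\alp\mid\Lambda_\alp\cap W\neq\emptyset\}\cap\{\alp\mid \Phi_\alp(\bfx)\notin I\ \text{for all}\ \bfx\in M\ \text{with}\ ||\bfx||>N\}.
\]
It suffices to prove that each (Borel) set $S:=S^{N}_{W,I}$ is both null and meager; the measure and the category arguments will run in parallel, differing only in using Lebesgue density versus the Baire property.

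The crux is a linearized covering estimate. Fix $S$ with, say, $W=[w_1,w_2]$ to the left of $I=[a,b]$, so $a>w_2$. For a far point $\bfx=r\bfu_\theta$ with $\theta=\alp^*+\tfrac\pi2+\eta$ near the direction perpendicular to a base angle $\alp^*$, and $\alp=\alp^*+\del$, one has $\Phi_\alp(\bfx)=-r\sin(\eta-\del)$; for the points that matter $|\eta-\del|=O(1/r)$, so $\Phi_{\alp^*+s}(\bfx)\approx\Phi_{\alp^*}(\bfx)+rs$ with error $O(1/r^{2})$. Assume $\lam(S)>0$ (resp.\ $S$ non-meager) and choose a Lebesgue density point (resp., via the Baire property, a point in whose neighbourhood $S$ is comeager) $\alp^*\in S$; after relabelling fix the perpendicular cluster in which $\alp^*$ has, by $\Lambda_{\alp^*}\cap W\neq\emptyset$, far points $\bfx$ with $\Phi_{\alp^*}(\bfx)$ in a fixed window disjoint from $I$ at arbitrarily large radii $r\to\infty$. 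For $\alp=\alp^*+s\in S$ with $s>0$, such a point is thrown into $I$ precisely when $r$ lies in the annulus $\big[\tfrac{a-w_2}{s},\tfrac{b-w_1}{s}\big]$, of fixed multiplicative width $\tfrac{b-w_1}{a-w_2}>1$; but membership $\alp\in S$ forbids this. Thus no large-radius near-$W$ point of $\alp^*$ may have radius in $\bigcup_{s}\big[\tfrac{a-w_2}{s},\tfrac{b-w_1}{s}\big]$, the union over $s$ with $\alp^*+s\in S$.

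To finish, note that a given large radius $\rho$ is covered by the annulus of any $s\in\big[\tfrac{a-w_2}{\rho},\tfrac{b-w_1}{\rho}\big]$, an interval near $0$ of fixed relative width; density $1$ (resp.\ comeagreness) of $S-\alp^*$ at $0^{+}$ forces such an $s$ to lie in $S-\alp^*$ once $\rho$ is large. Hence every sufficiently large radius is forbidden for the near-$W$ points of $\alp^*$, contradicting their existence at arbitrarily large radii. This gives $\lam(S)=0$ (resp.\ $S$ meager), and summing over the countable family shows $\PE(M)$ is null and meager. I expect the main obstacle to be this third step: controlling the linearization error uniformly in $r$ and organizing the covering of all large radii purely from the density/category thickness of $S$; the separation into the two perpendicular clusters and the left/right placement of $W$ and $I$ are routine but must be tracked.
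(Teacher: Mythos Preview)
Your proof is correct and takes a genuinely different route from the paper. The paper's argument is a reduction: it selects a maximal $1$-separated subset $M'\subset M$ (so $M'$ is discrete with $\hd(M,M')\le1$), proves the stability $\PB(M)=\PB(M')$ under bounded Hausdorff perturbations (Proposition~\ref{prop:hm<} and Theorem~\ref{thm:hmPB}), and then combines the already-established discrete case (Theorem~\ref{thm:cent}) with the relations \eqref{eq:PUB+DEN}--\eqref{eq:PB+DIS2} to obtain $\PE(M)\subset\E(M')\cup U$ with $\card(U)\le\omega$. Your approach is direct: the limit-set device $\Lambda_\alp$ replaces discreteness as the mechanism producing ``far points with bounded projection,'' and the density-point contradiction then runs exactly as in the proof of Theorem~\ref{thm:ep}, only written in polar form. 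In effect you prove Theorems~\ref{thm:cent} and~\ref{thm:cent2} in one stroke rather than deducing the second from the first.

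What each buys: the paper's reduction is short and modular, and it isolates the stability statement $\PB(M_1)=\PB(M_2)$ for $\hd(M_1,M_2)<\infty$ as a reusable byproduct feeding into Theorem~\ref{thm:sum}. Your direct argument is self-contained and conceptually transparent---the dichotomy $\Lambda_\alp\in\{\emptyset,\R\}$ for generic $\alp$ \emph{is} the theorem---but it requires the linearization control you flag: the $O(1/r^{2})$ correction in $\Phi_{\alp^{*}+s}(\bfx)=\Phi_{\alp^{*}}(\bfx)+rs+O(1/r^{2})$ for $s=O(1/r)$, which is absorbed by shrinking $I$ slightly before running the density argument. Two small remarks on execution: (i) for the category half you only need that $S$ is \emph{dense} in a neighborhood of $\alp^{*}$ (comeager implies dense), which already forces the open interval $\mathrm{int}(J_n)\subset S^{c}$ to be empty---no one-sided ``category density'' is needed; (ii) the split into the two perpendicular clusters together with the left/right placement of $W$ versus $I$ determines the sign of $s$, and one passes to a subsequence of far points lying in a single cluster before running the contradiction.
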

In other words, ``most'' directions, in both metric and topological senses, 
are either $P$-bounded, or \mbox{$P$-unbounded}. 

We observe that Theorem \ref{thm:cent2} indeed implies Theorem \ref{thm:cent}
in view of the following relations (taking place for discrete 
subsets \ $M\subset\R^2$):                                          \mbc{eqs:PE+E}
\begin{equation}\label{eqs:PE+E}
\E(M)\subset \PE(M), \qquad   \card\big(\PE(M)\minus\E(M)\big)\leq\omega.
\end{equation}
\vsp1
These relations (for discrete $M$) are derived easily  
from the following ones:                         \mbc{eq:PUB+DEN\\eq:PB+DIS\\eq:PB+DIS2}
\begin{subequations}
\begin{align}\label{eq:PUB+DEN}
\PUB(M)&=\DEN(M);\\[1mm] \label{eq:PB+DIS}
\PB(M)&\subset\DIS(M);\\[1mm] \label{eq:PB+DIS2}
\card\big(\DIS(M&)\minus\PB(M)\big)\leq\omega. 
\end{align}
\end{subequations}

While \eqref{eq:PUB+DEN} and \eqref{eq:PB+DIS} are obvious, 
\eqref{eq:PB+DIS2} follows from the inclusion\,
$\DIS(M)\minus\PB(M)\subset W(M)$, where $W(M)$ is the set of all directions 
determined by pairs of distinct points in  $M$, and the fact that \
$\card(W(M))\leq\omega$ \ because \ $\card(M)\leq\omega$.

Note that the partition \eqref{eq:part} is stable 
(modulo subsets of Lebesgue measure $0$)  under bounded pertur\-ba\-tions of a set 
$M\subset\R^{2}$  (Theorem~\ref{thm:sum}). 

Also, there are multidimensional analogues
of Theorems \ref{thm:cent} and \ref{thm:cent2} (see Section \ref{sec:mult}).

\section{ Exceptional sets are Borel. Maps  $\Psi_{\bet}$.}
It is often more convenient to work with the maps  $\Psi_{\bet}\colon \R^{2}\to\R$ 
defined by the formula
\begin{equation}\label{eq:proj2}
   \Psi_{\bet}(\bfx)=x_{1}+\bet x_2, \qquad \bfx=(x_1,x_2)\in\R^2,\qquad 
      (\text{for }\,\bet\in\R),
\end{equation}
rather than with the maps $\Phi_{\alp}$  (see \eqref{eq:proj}). 

By analogy with Definition \ref{def:3sets}, for every $M\subset\R^2$,  
one introduces the sets                                    \mbc{eq:DENP\\eq:DISP\\eq:EP}
\begin{subequations}
\begin{align}
\DEN'(M)&=\{\bet\in\R\mid \Psi_{\bet}(M) \ \text{ is dense}\}\label{eq:DENP},\\
\DIS'(M)&=\{\bet\in\R\mid \Psi_{\bet}(M) \ \text{ is discrete}\}\label{eq:DISP}\\[-1mm]
\intertext{and}                                              
\E'(M)&=\R\minus\big(\DIS'(M)\un1\DEN'(M)\big)\label{eq:EP}. 
\end{align}
\end{subequations}
The obvious connection
\[
\Phi_{\alp}(\bfx)=\cos \alp\cdot \Psi_{\bet}(\bfx), \qquad  \bet=\tan \alp;
\]
implies the equalities    \mbc{eq:DENPM\\eq:DISPM\\eq:E+E}
\begin{subequations}
\begin{align}
\DEN'(M)&=\tan(\DEN(M));\label{eq:DENPM} \\ 
\DIS'(M)&=\tan(\DIS(M));\label{eq:DISPM} \\[-1mm]
\intertext{and}  
\E'(M)&=\tan(\E(M)). \label{eq:E+E}
\end{align}
\end{subequations}

In view of \eqref{eq:DENPM}--\eqref{eq:E+E}, the facts that the sets $\DEN(M)$, 
$\DIS(M)$ and $\E(M)$  are Borel (for an arbitrary subset $M\subset\R^2$) 
follow immediately from the following theorem.               \mbc{thm:borelp}
\begin{thm}\label{thm:borelp}
For every set $M\subset\R^2$, the sets\,  $\DEN'(M), \DIS'(M), \E'(M)$ 
are Borel  subsets of\, $\R$.
\end{thm}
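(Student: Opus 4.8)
The plan is to prove that $\DEN'(M)$ and $\DIS'(M)$ are Borel; the remaining set is then handled for free, since $\E'(M)=\R\setminus(\DIS'(M)\cup\DEN'(M))$ is the complement of a union of two Borel sets. The only analytic input I will use is that for each fixed $\bfx=(x_1,x_2)$ the map $\beta\mapsto\Psi_\beta(\bfx)=x_1+\beta x_2$ is affine, hence continuous, in $\beta$.

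For $\DEN'(M)$, I would translate density into a countable conjunction of conditions indexed by rational intervals: $\Psi_\beta(M)$ is dense iff for every pair of rationals $a<b$ there is some $\bfx\in M$ with $a<\Psi_\beta(\bfx)<b$. Setting $U_{\bfx,a,b}=\{\beta\mid a<x_1+\beta x_2<b\}$, each $U_{\bfx,a,b}$ is open (the preimage of an open interval under a continuous map), so $\bigcup_{\bfx\in M}U_{\bfx,a,b}$ is open \emph{regardless of the cardinality of $M$}, and
\[
\DEN'(M)=\bigcap_{a,b\in\Q,\,a<b}\;\bigcup_{\bfx\in M}U_{\bfx,a,b}
\]
is a countable intersection of open sets, i.e.\ a $G_\delta$ set, hence Borel.

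The harder half is $\DIS'(M)$, and here I would describe its complement. Using the paper's definition of discreteness (finiteness of every intersection with a ball), $\Psi_\beta(M)$ fails to be discrete iff some bounded \emph{open} rational interval $(a,b)$ contains infinitely many points of $\Psi_\beta(M)$, so that
\[
\R\setminus\DIS'(M)=\bigcup_{a,b\in\Q,\,a<b}\;\bigcap_{n\in\N}D_{a,b,n},\qquad D_{a,b,n}=\{\beta\mid \card(\Psi_\beta(M)\cap(a,b))\geq n\}.
\]
The crux is to show each $D_{a,b,n}$ is open. Given $\beta_0\in D_{a,b,n}$, pick $\bfx_1,\dots,\bfx_n\in M$ whose images $\Psi_{\beta_0}(\bfx_i)$ are $n$ distinct points of the open interval $(a,b)$; since each $\beta\mapsto\Psi_\beta(\bfx_i)$ is continuous and the conditions $\Psi_\beta(\bfx_i)\in(a,b)$ and $\Psi_\beta(\bfx_i)\neq\Psi_\beta(\bfx_j)$ are open and hold at $\beta_0$, they persist on a neighborhood of $\beta_0$. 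Hence $D_{a,b,n}$ is open, $\bigcap_n D_{a,b,n}$ is $G_\delta$, and the displayed countable union is Borel.

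The main obstacle is precisely what this openness argument circumvents. The set of $\beta$ admitting "$n$ distinct image points'' is most naturally written as a union over the uncountably many $n$-tuples from $M$, and an uncountable union of Borel sets is a priori only analytic, not Borel. Two features rescue Borelness: denseness is witnessed by a union of \emph{open} sets, which is closed under arbitrary unions; and working with \emph{open} intervals $(a,b)$ (rather than closed ones) is what makes the sets $D_{a,b,n}$ open via the perturbation argument. I expect the only points needing genuine care to be the verification that discreteness in the paper's sense is equivalent to the ``no infinite rational open interval'' condition, and the deliberate use of open rather than closed intervals, so that a boundary value does not obstruct the openness of $D_{a,b,n}$.
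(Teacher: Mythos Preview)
Your argument is correct. The treatment of $\DEN'(M)$ matches the paper's exactly: both write it as a $G_\delta$ set by intersecting over rational intervals the open set of $\beta$ for which $\Psi_\beta(M)$ meets that interval. For $\DIS'(M)$, however, you take a genuinely different route. The paper first replaces $M$ by a countable dense subset (which leaves $\DIS'(M)$ unchanged), enumerates it as $\{m_k\}$, and isolates the at most countable set $W(M)$ of $\beta$ where $\Psi_\beta|_M$ fails to be injective; off $W(M)$, ``infinitely many image points in $J$'' becomes ``infinitely many indices $k$ with $\Psi_\beta(m_k)\in J$'', which is a countable Boolean combination of the open sets $U(k,J)$. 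Your approach instead shows directly that the set $D_{a,b,n}=\{\beta:\card(\Psi_\beta(M)\cap(a,b))\ge n\}$ is open, via the perturbation argument with $n$ witnesses. This is cleaner in that it handles arbitrary $M$ uniformly without the reduction to countable $M$ and without introducing $W(M)$; the paper's approach, on the other hand, foregrounds the countable exceptional set $W(M)$, which it reuses elsewhere (e.g.\ in establishing $\card(\DIS(M)\setminus\PB(M))\le\omega$).
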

\begin{cor}\label{cor:borel}
For every set $M\subset\R^2$, the sets\,  $\DEN(M)$, $\DIS(M)$, $\E(M)$ are 
Borel subsets of\, $\R$.
\end{cor}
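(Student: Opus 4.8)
The plan is to show the three sets are Borel by reducing the properties ``dense'' and ``discrete'' to \emph{countable} Boolean combinations of \emph{open} subsets of $\R$ (in the variable $\bet$). Since $\E'(M)=\R\minus\big(\DIS'(M)\cup\DEN'(M)\big)$ is the complement of a union of two sets, it suffices to prove that $\DEN'(M)$ and $\DIS'(M)$ are Borel.

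For $\DEN'(M)$, I would use that a subset of $\R$ is dense iff it meets every rational interval. For fixed rationals $p<q$ and a fixed point $\bfx=(x_1,x_2)\in M$, the set $\{\bet\in\R : p<x_1+\bet x_2<q\}$ is open in $\bet$ (an open interval, all of $\R$, or empty, depending on the sign of $x_2$). Hence, setting $A_{p,q}=\bigcup_{\bfx\in M}\{\bet : p<\Psi_\bet(\bfx)<q\}$, each $A_{p,q}$ is open, being a union of open sets, and $\DEN'(M)=\bigcap_{p<q,\ p,q\in\Q}A_{p,q}$ is a $G_\delta$ set, hence Borel.

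For $\DIS'(M)$, I would first observe that, by the paper's definition of discreteness together with Bolzano--Weierstrass, $\Psi_\bet(M)$ is discrete iff it has no accumulation point in $\R$, equivalently iff $\Psi_\bet(M)\cap(-N,N)$ is finite for every $N\in\N$. To detect infinitude of $\Psi_\bet(M)\cap(-N,N)$ I would use the elementary fact that a bounded subset of $\R$ is infinite iff for every $\eps>0$ it contains two distinct points within distance $\eps$. Accordingly, for $N,m\in\N$ set
\[
U_{N,m}=\bigcup_{\bfx,\bfy\in M}\big\{\bet : \Psi_\bet(\bfx),\Psi_\bet(\bfy)\in(-N,N),\ 0<|\Psi_\bet(\bfx)-\Psi_\bet(\bfy)|<1/m\big\}.
\]
Each building block is an intersection of three open conditions on $\bet$ (two open-interval membership conditions, plus $0<|(x_1-y_1)+\bet(x_2-y_2)|<1/m$, which is open), so $U_{N,m}$ is open. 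Then $V_N=\bigcap_m U_{N,m}=\{\bet : \Psi_\bet(M)\cap(-N,N)\text{ is infinite}\}$ is $G_\delta$, the non-discrete directions form $\bigcup_N V_N$, and therefore $\DIS'(M)=\R\minus\bigcup_N V_N$ is Borel.

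The main obstacle is that $M$ may be uncountable, so the unions over $\bfx$ (and over pairs $\bfx,\bfy$) indexing the sets above are uncountable, and an uncountable union of Borel sets need not be Borel. The crux of the argument is to arrange every such uncountable union to be a union of \emph{open} sets---by phrasing all defining conditions with strict inequalities and open intervals---so that the union is automatically open; all remaining set operations are countable intersections and unions over $p,q\in\Q$ and $N,m\in\N$, which preserve the Borel class.
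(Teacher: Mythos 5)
Your argument is correct, and where it matters it takes a genuinely different route from the paper. One bookkeeping caveat first: what you actually prove is Theorem \ref{thm:borelp} (Borel-ness of the primed sets $\DEN'(M),\DIS'(M),\E'(M)\subset\R$ in the variable $\bet$). Corollary \ref{cor:borel} concerns $\DEN(M),\DIS(M),\E(M)\subset\T$, so you still need the transfer \eqref{eq:DENPM}--\eqref{eq:E+E}: since $\tan$ is continuous on $\T\minus\{\pi/2,3\pi/2\}$, preimages of Borel sets under it are Borel, and adjusting by at most the two directions with $\cos\alp=0$ does not change the Borel class. That step is exactly the paper's one-line proof of the corollary, so this is a routine omission rather than a mathematical flaw.

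As for the comparison: your $\DEN'$ argument coincides with the paper's (a $G_\delta$, intersecting over rational intervals unions of open sets). For $\DIS'$ the paper first reduces to $\card(M)\leq\omega$ by passing to a countable dense subset, enumerates $M=\{m_k\}$, and must excise the countable set $W(M)$ of parameters where $\Psi_\bet$ is non-injective on $M$, because it detects infinitude of $\Psi_\bet(M)\cap J$ through infinitude of the index set $\{k:\Psi_\bet(m_k)\in J\}$. Your criterion---a bounded set is infinite iff for every $m$ it contains two distinct points at distance less than $1/m$---refers only to the image set, so non-injectivity never enters, and uncountability of $M$ costs nothing since all your unions over $M$ (and over pairs) are unions of open sets, hence open. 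You thus handle arbitrary $M$ with no reduction lemma and no $W(M)$; the paper's enumeration buys only an explicit index formula, and that formula is error-prone (as printed, the display for $\DIS'(M)\minus W(M)$ in fact describes parameters whose image meets every rational interval infinitely often---a slip your route never has to confront). Both arguments yield the same complexity: $\DEN'(M)$ is $G_\delta$, and the set of non-discrete parameters is a countable union of $G_\delta$ sets.
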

\begin{proof}[Proof of Corollary \ref{cor:borel}]
Follows from Theorem \ref{thm:borelp} and \eqref{eq:DENPM}--\eqref{eq:E+E}.
\\
\end{proof}
\begin{proof}[Proof of Theorem \ref{thm:borelp}]
Denote by \ $\Sigma$ \ the family of all rational subintervals of $\R$ (i.\,e., 
non-empty subintervals of $\R$ with the rational endpoints).  The presentation
\[
\DEN'(M)=\bigcap_{J\in\Sigma}\ \big\{\bet\in\R\mid\Psi_{\bet}(M)\cap
     J\neq\emptyset\big\}
\]
shows that $\DEN'(M)$  is Borel and in fact a $G_{\delta}$-set 
(a countable intersection of open sets).

We assume without loss of generality that $\card(M)\leq\omega$. (Otherwise $M$  is 
replaced by any of its at most countable dense subset; this replacement does not affect 
the sets \ $\DEN'(M)$,  $\DIS'(M)$ and $\E'(M)$). 

Denote by $W(M)$ the set of $\bet\in \R$  for which the map \
$\Psi_{\bet}\big|_{M}\colon M\to\R$ \ fails to be injective. 
Then \ $\card\big(W(M)\big)\leq\omega$ \ because \ $\card(M)\leq\omega$,
and the equation  $\Psi_{\bet}(p)=\Psi_{\bet}(q)$  has at most one solution $\bet\in\R$, 
for any pair of distinct points \ $p,q\in M$.  

Arrange the countable set $M$  into a 
sequence  $M=\{m_k\mid k\geq1\}$.  One verifies that \\
\[
\DIS'(M)\minus W(M)=\{\bet\in\R\mid\Psi_{\bet}(M)\cap J 
    \text{ is finite, for all } J\in\Sigma\}
    =\bigcap_{\substack{N\geq1\\[.5mm] J\in\Sigma }}
         \Big(\bigcup_{k\geq N} U(k,J)\Big)
\]
\vsp0
\ni where all the sets \  $U(k,J)=\big\{\bet\in\R\mid\Psi_{\bet}(m_k)\in J\big\}$ \ 
are open.  Thus  \ $\DIS'(M)\minus W(M)$ \ is Borel. Since 
$\card(W(M))\leq\omega$,  $\DIS'(M)$ is also Borel. 
Finally, $\E'(M)$ is Borel in view of \eqref{eq:EP}.\vspm2

\end{proof}
\section{Proof of Theorem \ref{thm:cent}}
In view of \eqref{eq:DENPM}--\eqref{eq:E+E},  it is enough to prove
the following theorem.
\begin{thm}\label{thm:ep}
For every discrete subset $M\subset\R^{2}$, the set $\E'(M)$ (defined by 
\eqref{eq:E+E}) is a meager subset of\, $\T$  of Lebesgue measure $0$.
\end{thm}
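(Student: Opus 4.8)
The plan is to reduce the statement to a single geometric fact about the affine maps $\bet\mapsto\Psi_{\bet}(m_k)$, and then to read off both conclusions from the Lebesgue density theorem and a Baire category argument. Write $M=\{m_k\mid k\ge1\}$, $m_k=(a_k,b_k)$, and recall the open sets $U(k,J)=\{\bet\in\R\mid\Psi_{\bet}(m_k)\in J\}$ from the proof of Theorem \ref{thm:borelp}. If $\bet\notin\DEN'(M)$ then $\Psi_{\bet}(M)$ misses some open interval, hence some bounded $J\in\Sigma$ of positive length, so $\bet\in V_J:=\R\minus\bigcup_k U(k,J)$. If $\bet\notin\DIS'(M)$ then $\Psi_{\bet}(M)$ has a limit point, so some bounded $K\in\Sigma$ contains infinitely many of its distinct values, whence $\bet\in\limsup_k U(k,K)$. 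Therefore $\E'(M)\subseteq\bigcup_{J,K\in\Sigma} E_{J,K}$, where $E_{J,K}:=V_J\cap\limsup_k U(k,K)$. As both ``Lebesgue-null'' and ``meager'' are preserved under countable unions, it suffices to fix bounded $J,K\in\Sigma$ with $\lam(J)>0$ and prove that $E_{J,K}$ is null and nowhere dense.

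For $b_k\neq0$ the sets $U(k,J)$ and $U(k,K)$ are intervals of lengths $\lam(J)/|b_k|$ and $\lam(K)/|b_k|$, whose centers $-a_k/b_k+c_J/b_k$ and $-a_k/b_k+c_K/b_k$ (with $c_J,c_K$ the midpoints of $J,K$) differ by $|c_J-c_K|/|b_k|$. The key point is that along an accumulation these two intervals collapse onto one another. Indeed, fix $\bet_0\in E_{J,K}$. Since $\bet_0\in\limsup_k U(k,K)$ there are infinitely many $k$ with $\Psi_{\bet_0}(m_k)\in K$; these are distinct points of the discrete set $M$ with bounded projection, so $\|m_k\|\to\infty$ forces $|b_k|\to\infty$ along this subsequence. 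For each such $k$ the interval $U(k,J)$ then lies in the window $W_k=[\bet_0-C/|b_k|,\,\bet_0+C/|b_k|]$ for a constant $C=C(J,K)$, has length $\lam(J)/|b_k|$, and is disjoint from $E_{J,K}$ because $E_{J,K}\subseteq V_J$ and $V_J\cap U(k,J)=\emptyset$. Thus $E_{J,K}$ omits a block of relative length at least $\lam(J)/(2C)>0$ from each of the shrinking windows $W_k$.

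Both conclusions now follow from this one fact. If $\lam(E_{J,K})>0$, choose a Lebesgue density point $\bet_0$ of $E_{J,K}$; the windows $W_k$ above have length tending to $0$ yet satisfy $\lam(E_{J,K}\cap W_k)/\lam(W_k)\le 1-\lam(J)/(2C)$, contradicting that the density equals $1$ at $\bet_0$. Hence $\lam(E_{J,K})=0$. For meagerness, suppose $E_{J,K}$ were dense in some interval; since $V_J$ is closed, that interval $I_0$ would lie in $V_J$ and $\limsup_k U(k,K)$ would be dense in $I_0$. Picking a subinterval $I'\subseteq I_0$ at positive distance from $\partial I_0$ and a $k$ with $|b_k|$ large and $U(k,K)\cap I'\neq\emptyset$, the neighbouring interval $U(k,J)$ would then meet $I_0$, contradicting $I_0\subseteq V_J$. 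So $E_{J,K}$ is nowhere dense, and summing over the countably many pairs $(J,K)$ completes the proof.

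The crux, and the only place where discreteness of $M$ is used, is the implication $\|m_k\|\to\infty\Rightarrow|b_k|\to\infty$ for the points producing an accumulation: it says that such points must escape to infinity almost along the level lines of $\Psi_{\bet_0}$, which is precisely what makes $U(k,J)$ and $U(k,K)$ asymptotically coincide and lets a single persistent gap $J$ preclude accumulation. I expect the verification of this collapse, together with the bookkeeping of the constant $C$ and the reduction that ``not dense'' and ``not discrete'' are witnessed by rational intervals, to be the main (though not severe) obstacle; the remaining density-point and category arguments are soft.
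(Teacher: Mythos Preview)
Your proof is correct and follows essentially the same route as the paper: the paper covers $\E'(M)$ by the countable family of sets $V(P,Q)=\{\bet:\Psi_\bet(M)\cap P\text{ infinite},\ \Psi_\bet(M)\cap Q=\emptyset\}$, which coincide (up to the harmless $\limsup$ reformulation) with your $E_{J,K}$ under the dictionary $P\leftrightarrow K$, $Q\leftrightarrow J$, and then for each $\bet$ in such a set uses discreteness to force $|b_k|\to\infty$ along the accumulating subsequence, producing shrinking windows around $\bet$ that each contain an open subinterval $U(k,J)$ of fixed relative length disjoint from the set. The paper reads off ``nowhere dense'' and ``no Lebesgue density point'' from this in one stroke, while you separate the category argument via the closedness of $V_J$; both are minor variants of the same geometric observation.
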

\begin{proof}
Denote by \ $\Sigma$ \ the family of all rational subintervals of $\R$ (i.\,e., 
non-empty subintervals of $\R$ with the rational endpoints).
For any two finite subintervals  $P,Q\in\Sigma$,  define the set
\begin{equation}\label{eq:vpq}
V(P,Q)=\Big\{\bet\in\R\ \Big|\ \Psi_{\bet}(M)\cap P \text{ is infinite, and } 
    \Psi_{\bet}(M)\cap Q=\emptyset \Big\}.
\end{equation}
For $\beta\in\R$, the condition $\bet\in\E'(M)$  is equivalent to the existence of 
two finite interval  $P$ and $Q$  (without loss of generality, $P,Q\in\Sigma$)  such 
that $\bet\in V(P,Q)$. (Indeed, the existence of $P$  means that $\bet\notin\DIS'(M)$,
and the existence of $Q$ is equivalent to the condition $\bet\notin\DEN'(M)$).

Thus  $\E'(M)$  can be represented as the countable union
\[
\E'(M)=\bigcup_{P,Q\in\Sigma}V(P,Q).
\]
To complete the proof of Theorem \ref{thm:ep}, it remain to verify that every set
$V(P,Q)$  is nowhere dense and has Lebesgue measure $0$.

Fix  $\bet\in V(P,Q)$, $\bet\neq0$. Since $\Psi_{\beta}(M)\cap P$  is infinite, 
there exists an infinite sequence of distinct points  
$\bfz_k=(x_k,y_k)\in M\subset\R^2$, $k\geq1$,  
such that  $\Psi_{\bet}(\bfz_k)=x_k+\bet y_k\in P$. Since  $M$  is discrete,
\[
\lim_{k\to\infty}||\bfz_k||\to\infty.
\]
Moreover, we have
\[
\lim_{k\to\infty}|x_k|=\lim_{k\to\infty}|y_k|=\infty
\]
because the points $\Psi_{\bet}(\bfz_k)=x_k+\bet y_k\in P$\ lie in the
(bounded) interval $P$, and $\bet\neq0$.

We may assume that all $y_k\neq0$ (by dropping a few first terms of $\{\bfz_k\}$ 
if needed).  Denote $\eps_k=|y_k|^{-1}$. Consider two sequences of intervals:
\begin{align*}
P_k&=\Big\{t\in\R\ \big| \ \Psi_t(\bfz_k)=x_k+ty_k\subset P\Big\}=(P-x_k)\,\eps_k\\
Q_k&=\Big\{t\in\R\ \big| \ \Psi_t(\bfz_k)=x_k+ty_k\subset Q\Big\}=(Q-x_k)\,\eps_k.
\end{align*}
Set
\begin{align*}
d&=\diam(P\cup Q); && q\ =\diam(Q)\h2 =\lam(Q);\\
d_k&=\diam(P_k\cup Q_k); && q_{k}=\diam(Q_{k})=\lam(Q_{k});
\end{align*}
where $\diam(A)$ stands for the diameter of a set $A$. Clearly \
$q_k=q\cdot\eps_k$ \ and \  $d_k=d\cdot\eps_k$.

Since $\bet\in V(P,Q)$, for all $k\geq1$, the relations
\[
\bet\in P_{k}\quad \text{ and } \quad Q_{k}\cap V(P,Q)=\emptyset
\]
hold by the definition of $V(P,Q)$  (see \eqref{eq:vpq}). 

We observe that, for every $k\geq1$, the $d_k$-neighborhood 
\[
J_k=\big(\bet-d_k, \bet+d_k\big)
\] 
of $\bet$ contains a subinterval $Q_k$ of length  $q_k$  which does not intersect 
$V(P,Q)$. Note that $\lam(J_k)=2d_k\to 0$ as $k\to 0$, and the ratio \ 
$\frac{\lam(Q_k)}{\lam(J_k)}=\frac q{2d}\leq\frac12$ \ does not depend on $k$.

It has been shown that for every $\bet\in V(P,Q)\minus\{0\}$ \ one can find
arbitrary small intervals $J_k$  around $\beta$  which contains a further subinterval 
$Q_k$ of relative density $\frac q{2d}$ such that $Q_k\cap V(P,Q)=\emptyset$.

The above property implies that the set $V(P,Q)$  is nowhere dense and has Lebesgue
measure $0$  (because it has no Lebesgue density points). The proof of 
Theorem \ref{thm:ep} is complete. \\[-2mm]
\end{proof}
\section{Exceptional set $\E(M)$ may have full Hausdorff dimension}    \label{sec:exhd1}

The following proposition provides an example of a discrete set $M\subset\R^2$ 
whose exceptional set  $\E(M)$  has full Hausdorff dimension.
A class of examples of such $M$  will be presented in the next section.   \mbc{prop:difsqrs}
\begin{prop}\label{prop:difsqrs}
For the set
\[
M_0=\{(m^2,n^2)\mid m,n\in\N={1,2,3,\ldots}\}\subset\R^2,
\]
the exceptional set has full Hausdorff dimension: \ $\hdim(\E(M_0))=1$.
\end{prop}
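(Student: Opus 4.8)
The plan is to reduce, via the tangent substitution \eqref{eq:DENPM}--\eqref{eq:E+E}, to producing inside $\E'(M_0)$ a subset of full Hausdorff dimension. Since $\alpha\mapsto\tan\alpha$ and its inverse are bi-Lipschitz on compact arcs away from the poles, and bi-Lipschitz maps preserve Hausdorff dimension, one has $\dimh\E(M_0)=\dimh\E'(M_0)$, so it suffices to show $\dimh\E'(M_0)=1$. The family I would use is $\beta=-t^2$ where $t$ is a \emph{badly approximable} irrational (bounded partial quotients) lying in $(1,2)$ with $t^2\notin\Q$. For such $t$ the projected values are $\Psi_{-t^2}(m^2,n^2)=m^2-t^2n^2=(m-tn)(m+tn)$, and I will verify that each such $\beta$ lies in $\E'(M_0)$, i.e. that $\Psi_{-t^2}(M_0)$ is neither discrete nor dense.

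For non-discreteness I would use the convergents $p_k/q_k$ of $t$. Bounded partial quotients $a_k\le A$ give $|q_kt-p_k|\asymp 1/q_k$, and since $p_k+tq_k=2tq_k+O(1/q_k)$ this yields $|p_k^2-t^2q_k^2|=|q_kt-p_k|\,(p_k+tq_k)\asymp 2t$; hence infinitely many of the numbers $p_k^2-t^2q_k^2=\Psi_{-t^2}(p_k^2,q_k^2)$ lie in one fixed bounded interval. These values are pairwise distinct, because an equality $p_k^2-t^2q_k^2=p_j^2-t^2q_j^2$ with $q_k\neq q_j$ would force $t^2=(p_k^2-p_j^2)/(q_k^2-q_j^2)\in\Q$, contrary to $t^2\notin\Q$. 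Thus $\Psi_{-t^2}(M_0)$ meets a bounded interval in an infinite set, so $\beta\notin\DIS'(M_0)$.

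For non-density I would exhibit an interval disjoint from $\Psi_{-t^2}(M_0)$, using the defining inequality $\|nt\|\ge c/n$ (for some $c>0$ and all $n\ge1$, where $\|\cdot\|$ is distance to the nearest integer) of a badly approximable $t$. A positive value $m^2-t^2n^2>0$ forces $m>tn$, hence $m-tn\ge 1-\{tn\}\ge\|nt\|\ge c/n$, while $m+tn>2tn$; therefore $m^2-t^2n^2=(m-tn)(m+tn)>2tc>2c$ for every admissible pair $m,n\ge1$. Consequently $\Psi_{-t^2}(M_0)$ avoids the interval $(0,2c)$ entirely, so it is not dense and $\beta\notin\DEN'(M_0)$. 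Combining the two estimates, $\beta=-t^2\in\E'(M_0)$.

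Finally, $t\mapsto -t^2$ is bi-Lipschitz on $(1,2)$, so the image of the badly approximable $t\in(1,2)$ (minus the countable set with $t^2\in\Q$, which does not change dimension) lies in $\E'(M_0)$ and has the same Hausdorff dimension as that set of $t$. By the classical theorem of Jarn\'ik the badly approximable numbers have full Hausdorff dimension in every interval, so this dimension equals $1$; hence $\dimh\E'(M_0)=1$ and therefore $\dimh\E(M_0)=1$. The one genuinely nontrivial input is Jarn\'ik's full-dimension theorem (or, for a self-contained version, a Cantor set of continued fractions with partial quotients bounded by $A$ whose dimension tends to $1$ as $A\to\infty$); the two projection estimates are elementary once $\beta=-t^2$ is chosen, and the main point to get right is that the single badly-approximable bound $\|nt\|\ge c/n$ simultaneously produces the avoided interval (non-density) and, through the convergents, the bounded infinite cluster (non-discreteness).
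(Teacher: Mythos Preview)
Your argument is correct and follows essentially the same route as the paper: reduce to $\E'(M_0)$ via \eqref{eq:E+E}, factor $m^2-t^2n^2=(m-tn)(m+tn)$, use Dirichlet-type approximation to show non-discreteness, use the badly-approximable lower bound $\|nt\|\ge c/n$ to produce an avoided interval (non-density), and invoke Jarn\'ik's theorem that $\dimh(\BA)=1$ together with the bi-Lipschitz squaring map. The only cosmetic differences are that the paper takes $\alpha=\beta^2>0$ (working directly with $\alpha m^2-n^2$) and obtains a symmetric gap $(-\eps\beta,\eps\beta)$ rather than your one-sided gap $(0,2c)$, and that the paper's non-discreteness lemma (Lemma~\ref{lem:noDIS}) applies to \emph{every} irrational $\alpha>0$ via Dirichlet, whereas you specialize at once to badly approximable $t$ and use convergents---but this is the same idea.
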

In view of \eqref{eq:E+E}, it is enough to show that\, $\hdim(\E'(M_0))=1$.

For $\alp\in\R$,  denote                                        \mbc{eq:Lalp}
\begin{equation}\label{eq:Lalp}
L(\alp)=\Psi_{\alp}(M_0)\subset\R=\big\{\alp m^2-n^2\mid m,n\in\N\big\}.
\end{equation}
Then (see notation in \eqref{eq:DENP}--\eqref{eq:EP})
\begin{subequations}
\begin{align}
\DEN'(M_0)&=\{\alp\in\R\mid L(\alp) \text{ is dense in }\R\},\label{eq:DENP0}\\
\DIS'(M_0)&=\{\alp\in\R\mid L(\alp) \text{ is discrete in }\R\},\notag\\[-1mm]
\intertext{and}
\E'(M_0)&=\R\minus\big(\DIS'(M_0)\un1\DEN'(M_0)\big).\label{eq:EP0}
\end{align}
\end{subequations}
\vsp1
\begin{lem}\label{lem:noDIS}
No irrational\, $\alp>0$ lies in $\DIS'(M_0)$.
\end{lem}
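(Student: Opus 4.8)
The plan is to show that for every irrational $\alpha>0$ the set
$L(\alpha)=\{\alpha m^2-n^2\mid m,n\in\N\}$ meets some fixed bounded interval in infinitely many points; this contradicts discreteness of $L(\alpha)$ and hence places $\alpha$ outside $\DIS'(M_0)$. The mechanism is the factorization
\[
\alpha m^2-n^2=(\sqrt{\alpha}\,m-n)(\sqrt{\alpha}\,m+n),
\]
which makes $\alpha m^2-n^2$ small precisely when $n/m$ is an unusually good rational approximation to $\sqrt{\alpha}$.

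First I would note that $\sqrt{\alpha}$ is irrational (as $\alpha>0$ is irrational), so Dirichlet's approximation theorem supplies infinitely many pairs $(m,n)\in\N\times\N$ --- taking $n$ to be the nearest integer to $\sqrt{\alpha}\,m$, which is positive once $m$ is large --- with $|\sqrt{\alpha}\,m-n|<1/m$. For each such pair I would bound the second factor by $\sqrt{\alpha}\,m+n<2\sqrt{\alpha}\,m+1$, so that
\[
|\alpha m^2-n^2|=|\sqrt{\alpha}\,m-n|\,(\sqrt{\alpha}\,m+n)<\tfrac1m\,(2\sqrt{\alpha}\,m+1)\le 2\sqrt{\alpha}+1.
\]
Thus all of these infinitely many values of $\alpha m^2-n^2$ lie in the single bounded interval $[-C,C]$ with $C=2\sqrt{\alpha}+1$.

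Next I would verify that distinct pairs yield distinct values, so that we really obtain infinitely many \emph{points} of $L(\alpha)$ in $[-C,C]$ rather than one value repeated. Indeed, if $\alpha m_1^2-n_1^2=\alpha m_2^2-n_2^2$ then $\alpha(m_1^2-m_2^2)=n_1^2-n_2^2$; since $\alpha$ is irrational this forces $m_1=m_2$ and then $n_1=n_2$ (both being positive integers). Because Dirichlet's theorem produces infinitely many distinct denominators $m$, the pairs are pairwise distinct, so $L(\alpha)\cap[-C,C]$ is infinite. A discrete set cannot meet a bounded interval in infinitely many points, hence $\alpha\notin\DIS'(M_0)$, which is the assertion of the lemma.

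I expect the only point requiring care to be the supply of approximations: I must ensure that the pairs delivered by Dirichlet's theorem have both coordinates in $\N$ and that the denominators tend to infinity (both are automatic, since Dirichlet gives infinitely many distinct denominators $m$ and the nearest integer to $\sqrt{\alpha}\,m$ is a positive integer for large $m$). Once that is in place, the factorization together with the elementary bound above finishes the argument.
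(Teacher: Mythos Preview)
Your proof is correct and follows essentially the same route as the paper: factor $\alpha m^2-n^2=(\sqrt\alpha\,m-n)(\sqrt\alpha\,m+n)$, invoke Dirichlet's theorem for the irrational $\sqrt\alpha$ to get infinitely many pairs with $|\sqrt\alpha\,m-n|<1/m$, bound the product by $2\sqrt\alpha+1$, and use irrationality of $\alpha$ to ensure the resulting values are distinct. The only difference is that you make the distinctness step and the positivity of $n$ explicit, whereas the paper leaves these as one-line remarks.
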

\begin{proof}
Assume to the contrary that some irrational $\alp\in\DIS'(M_0)$. 
Then the number $\bet=\sqrt{\alp}$  is also irrational,
and the diophantine inequality \ $|m\bet-n|<\frac1m$ \ has infinitely 
many solutions in \ $m,n\in\Z^2, m\geq1$. For each such a solution, we have
\begin{align*}
\big|\alp m^2-n^2\big|&=\big|m\bet-n\big|\cdot\big|m\bet+n\big|<
       \frac{\left|m\bet+n\right|}m=\\
       &=\frac{\big|2m\bet-(m\bet-n)\big|}m\ <\,2\bet+1.
\end{align*}
Sinve $\alp$ is irratinal,  we conclude that  $L(\alp)$ contains infinitely many
points
in the finite interval  \mbox{$(-(2\bet+1),\,2\bet+1)$}.  Thus $L(\alp)$  is not discrete,
a contradiction with the assumption that   $\alp\in\DIS'(M_0)$.\\
\end{proof}
For $x\in\R$,  denote by $\fractional{x}$ the distance from $x$ to the closest integer:
$\fractional{x}=\dist(x,\Z)=\min_{k\in\Z}\limits|x-k|$. 

Let $\N=\{k\in\Z\mid k\geq1\}=\{1,2,\ldots\}$ stand for the set of natural numbers.
A number $\alp\in\R$ is called
{\em badly approximable} if there exists an $\eps>0$ such that 
$m\fractional{m\alp}>\eps$, for all $m\in\N$. 
Denote by $\BA$ the set of badly approximable numbers. It is clear that this set
does not contain rationals: \ $\BA\cap\Q=\emptyset$.
\begin{lem}\label{lem:noDEN}
If $\bet\in\BA$ then $\alp=\bet^2\notin\DEN'(M_0)$.
\end{lem}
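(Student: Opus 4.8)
The plan is to show that the set $L(\bet^2)=\{\bet^2m^2-n^2\mid m,n\in\N\}$ defined in \eqref{eq:Lalp} fails to be dense by exhibiting a fixed open interval around the origin that it avoids entirely; this immediately gives $\bet^2\notin\DEN'(M_0)$. First I would normalize the sign: since $\bet^2=(-\bet)^2$ leaves $\alp=\bet^2$ and the set $L(\bet^2)$ unchanged, and since $\fractional{m\bet}=\fractional{-m\bet}$ (so that $\bet\in\BA$ if and only if $-\bet\in\BA$), I may assume without loss of generality that $\bet>0$. Note also that $\bet$ is irrational, because $\BA\cap\Q=\emptyset$.

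The key step is a single uniform lower bound on $|\bet^2m^2-n^2|$, obtained from the factorization
\[
\bet^2m^2-n^2=(\bet m-n)(\bet m+n),\qquad m,n\in\N,
\]
by estimating the two factors separately. For the first factor, since $n\in\Z$ one has $|\bet m-n|\geq\dist(\bet m,\Z)=\fractional{m\bet}$ for \emph{every} integer $n$; for the second, since $\bet>0$ and $n\geq1$ one has $\bet m+n>\bet m>0$. Multiplying and invoking the defining property of a badly approximable number (an $\eps>0$ with $m\fractional{m\bet}>\eps$ for all $m\in\N$) yields
\[
|\bet^2m^2-n^2|>\fractional{m\bet}\cdot\bet m=\bet\,\bigl(m\fractional{m\bet}\bigr)>\bet\eps
\]
for all $m,n\in\N$. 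Consequently $L(\bet^2)\cap(-\bet\eps,\bet\eps)=\emptyset$, so $L(\bet^2)$ is not dense in $\R$, which is exactly $\bet^2\notin\DEN'(M_0)$.

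The only point that looks like a genuine obstacle is the worry that values of $\bet^2m^2-n^2$ lying close to $0$ could arise from pairs $(m,n)$ in which $n$ is \emph{not} the integer nearest to $\bet m$, apparently forcing a delicate two-sided analysis of which $n$ minimizes $|\bet m-n|\,(\bet m+n)$. The factorization dissolves this difficulty: the bound $|\bet m-n|\geq\fractional{m\bet}$ holds for all $n\in\Z$ at once, so the estimate is automatically uniform in $n$, and no case analysis on the position of $n$ relative to $\bet m$ is required. The remaining ingredients — that $\bet$ is positive and irrational, and that $n\geq1$ makes $\bet m+n$ at least $\bet m$ — are routine, so I expect the whole argument to be short.
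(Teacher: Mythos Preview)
Your proposal is correct and follows essentially the same route as the paper's own proof: both reduce to $\bet>0$, factor $\bet^2m^2-n^2=(\bet m-n)(\bet m+n)$, bound $|\bet m-n|\geq\fractional{m\bet}$ and $\bet m+n>\bet m$, and combine with $m\,\fractional{m\bet}>\eps$ to conclude that $L(\bet^2)$ misses the interval $(-\eps\bet,\eps\bet)$. The only cosmetic difference is the order in which the two estimates are applied in the final chain of inequalities.
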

\begin{proof}
Since  $\bet\in\BA$, there exists an $\eps>0$ such that $m\fractional{m\alp}>\eps$,
for all $m\in\N$. Without loss of generality, one assumes that $\bet>0$. Then, for any
$m,n\in\N$, the following inequality holds:
\[
\big|\alp m^2-n^2\big|=\big|m\bet-n\big|\cdot\big|m\bet+n\big|\geq
      \fractional{m\bet}\cdot\big|m\bet+n\big|>\frac {\eps\,|m\bet+n|}m>\eps\bet.
\]

We observe that $L(\alp)\cap\big(-\eps\bet,\eps\bet\big)=\emptyset$ (see \eqref{eq:Lalp}),
whence  $\alp\notin\DEN'(M_0)$ (by \eqref{eq:DENP0}), a contradiction.\\
\end{proof}
\begin{corollary}\label{cor:sqrtE}
Assume that $\alp>0$ is irrational such that $\bet=\sqrt \alp\in\BA$  
(i.\,e., that $\bet$ is badly approximable). Then $\alp\in\E'(M_0)$.
\end{corollary}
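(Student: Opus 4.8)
The plan is to recognize that this corollary is nothing more than the conjunction of the two lemmas just established, reassembled through the defining relation \eqref{eq:EP0} for $\E'(M_0)$. That relation states $\E'(M_0)=\R\minus\big(\DIS'(M_0)\un1\DEN'(M_0)\big)$, so to place a given $\alp$ in $\E'(M_0)$ it suffices to verify the two non-memberships $\alp\notin\DIS'(M_0)$ and $\alp\notin\DEN'(M_0)$ separately. Each will come from one of the preceding lemmas, once I check that the standing hypotheses on $\alp$ and on $\bet=\sqrt{\alp}$ match the hypotheses of those lemmas exactly.

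First I would dispatch discreteness. By assumption $\alp>0$ and $\alp$ is irrational, which are precisely the hypotheses of Lemma \ref{lem:noDIS}; that lemma therefore yields $\alp\notin\DIS'(M_0)$ at once. The analytic content here --- that the Diophantine inequality $|m\bet-n|<\frac1m$ has infinitely many solutions and so forces infinitely many points of $L(\alp)$ into a bounded interval --- has already been carried out inside the lemma, so nothing further is required.

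Next I would rule out density. The standing hypothesis is exactly $\bet=\sqrt{\alp}\in\BA$, and by construction $\alp=\bet^2$, so Lemma \ref{lem:noDEN} applies verbatim and gives $\alp\notin\DEN'(M_0)$. Combining the two exclusions with \eqref{eq:EP0} then places $\alp\in\E'(M_0)$, which is the assertion. I expect no genuine obstacle at this step: all the estimates live in the lemmas, and the corollary is a one-line bookkeeping step. The only point requiring care is the alignment of hypotheses just noted --- in particular that positivity and irrationality of $\alp$ are both available for Lemma \ref{lem:noDIS}, and that the badly approximable input feeding Lemma \ref{lem:noDEN} is the square root $\bet$, not $\alp$ itself.
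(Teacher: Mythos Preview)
Your proposal is correct and follows exactly the same route as the paper: invoke Lemma~\ref{lem:noDIS} to exclude $\alp$ from $\DIS'(M_0)$, invoke Lemma~\ref{lem:noDEN} to exclude it from $\DEN'(M_0)$, and conclude via \eqref{eq:EP0}. The paper's proof is the same one-line bookkeeping, stated more tersely.
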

\begin{proof}
One derives $\alp\notin\DIS'(M_0)$ and $\alp\notin\DEN'(M)$ from 
Lemmas \ref{lem:noDIS} and \ref{lem:noDEN}, respectively. Therefore
$\alp\in\E'(M_0)$, in view of \eqref{eq:EP0}.\\[-2mm]
\end{proof}
Denote by  \ $C=(\BA)^2\minus\Q$ \ the set of irrational squares of badly 
approximable numbers. By Corollary~\ref{cor:sqrtE},   $C\subset\E'(M_0)$. It is 
well known that $\hdim(BA)=1$. (The set $\BA\subset\R$ of badly approximable 
numbers has full Hausdorff dimension, see e.\,g.~\cite{Schmidt}). 

It follows that  $1\geq\hdim(\E'(M_0))\geq\hdim(C)=1$ and hence
$\hdim(\E(M_0))=1$ (in view of \eqref{eq:E+E}).  This completes the proof
of Proposition \ref{prop:difsqrs}.
\begin{rem}
The above arguments coupled with Theorem \ref{thm:cent} provide a short proof 
of the known fact that the set  $\BA$  of badly approximable numbers has 
Lebesgue measure $0$.    
\end{rem}  
\section{More examples} \label{sec:examples}
A sequence  $\bfr=\{r_k\}_{k\geq1}$  of real numbers is said to be 
{\em rising to infinity}  if it is strictly increasing and if 
$\lim_{k\to\infty}\limits r_k=\infty$. For any such a sequence\, $\bfr$, 
define the discrete set     \mbc{eq:mr}
\begin{equation}\label{eq:mr}
    M(\bfr)=\big\{(n,r_k)\mid n,k\in\Z, 
       k\geq1\big\}\subset\R^2.
\end{equation}
By Theorem \ref{thm:cent}, $\lam(\e(M(\bfr)))=0$.                 \mbc{prop:exmr}
\begin{prop}\label{prop:exmr}
  Let   $\bfr=\{r_k\}_{k\geq1}$ be a rising to infinity sequence 
  of real numbers. Then
\begin{itemize}
\item[(1)] If\, $\bfr$ is lacunary (i.\,e., if \ 
           $\liminf_{k\geq1}\limits\frac{r_{k+1}}{r_k}>1$)
           then\, $\dimh(\E(M(\bfr)))=1$.
\item[(2)] If\, $\bfr$ is sublacunary (i.\,e., if \ 
    $\lim_{k\to\infty}\limits\frac{r_{k+1}}{r_k}=1$) then\, $\dimh(\E(M(\bfr)))=0$.
\item[(3)] If\, $\limsup_{k\to\infty}\limits\, (r_{k+1}-r_k)<\infty$\, then\, 
   $\card(\E(M(\bfr)))\leq\omega$\,  (i.\,e., the set\, $\E(M(\bfr))$  
   is at most countable).
\end{itemize}
\end{prop}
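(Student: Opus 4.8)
# Proof Proposal for Proposition 5.1

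The plan is to analyze the set $M(\bfr)=\{(n,r_k)\mid n\in\Z, k\geq1\}$ by understanding the projection maps $\Psi_\bet(n,r_k)=n+\bet r_k$ directly. For a fixed direction $\bet$, the image $\Psi_\bet(M(\bfr))=\{n+\bet r_k\}$ is, for each fixed $k$, the integer-translates of $\bet r_k$. So $\Psi_\bet(M(\bfr))$ is discrete if and only if the fractional parts $\{\fractional{\bet r_k}\}_{k\geq1}$ avoid accumulating at $0$ (equivalently, $\inf_k \fractional{\bet r_k}>0$ once we drop finitely many terms), it is dense if and only if $\{\fractional{\bet r_k}\}$ is dense in $[0,1)$, and $\bet$ is exceptional precisely when the sequence $\fractional{\bet r_k}$ clusters at some interior point but not densely, i.e.\ $0$ is not an accumulation point yet the sequence fails to be dense. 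Thus the entire problem reduces to the distribution of $\fractional{\bet r_k}$ on the circle $\R/\Z$, and each part becomes a statement about lacunary versus sublacunary growth of $\bfr$.

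For part (1), the key input is the classical theory of lacunary sequences: if $r_{k+1}/r_k\geq\lam>1$, then for a set of $\bet$ of full Hausdorff dimension the sequence $\fractional{\bet r_k}$ is \emph{not} equidistributed and in fact can be forced to avoid a fixed subinterval while still clustering away from $0$. More concretely, I would construct, via a Cantor-type (self-similar) set of parameters $\bet$, a positive-dimensional family for which $\fractional{\bet r_k}$ stays inside some fixed arc bounded away from both $0$ and density. The lacunarity gap $\lam>1$ is exactly what permits a nested-interval construction with controlled contraction ratios bounded below, yielding $\hdim=1$ as $\lam\to1$ from the relevant family; one typically obtains a dimension lower bound approaching $1$ and, by taking a countable union or a refined construction, exactly $1$. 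This mirrors the mechanism in Proposition 4.1, where badly approximable numbers supplied the full-dimensional exceptional set.

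For part (2), sublacunarity $r_{k+1}/r_k\to1$ should force $\E'(M(\bfr))$ to have Hausdorff dimension $0$. The heuristic is that when consecutive terms are very close multiplicatively, the images $\fractional{\bet r_k}$ are constrained: if $0$ is not an accumulation point of $\fractional{\bet r_k}$ (so $\bet\notin\DEN'$ would require avoiding a gap), the slow growth means the increments $\bet(r_{k+1}-r_k)$ relative to $r_k$ shrink, preventing the sequence from lingering in a forbidden arc without eventually either becoming dense or returning near $0$. I would aim to show that the set of exceptional $\bet$ is contained in a countable union of sets each covered efficiently at every scale, giving dimension $0$. The precise covering argument is the delicate part here.

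The main obstacle I anticipate is part (2), and specifically producing a clean dimension-zero covering bound from the sublacunarity hypothesis alone: sublacunarity is a purely asymptotic ratio condition and does not directly control the fine-scale distribution of $\fractional{\bet r_k}$, so translating $r_{k+1}/r_k\to1$ into a quantitative smallness of the exceptional parameter set will require a careful multiscale estimate. Part (3) should be the easiest: when the gaps $r_{k+1}-r_k$ are bounded, say by $G$, the sequence $\bet r_k$ has bounded consecutive increments for any fixed $\bet$ in a bounded range, so for all but countably many $\bet$ the images $\fractional{\bet r_k}$ either cover the circle densely or accumulate at $0$; the countable exceptional set arises from the countably many ``resonant'' values of $\bet$ where a rational relation pins the sequence into a finite set of residues. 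I would extract the countability by identifying these resonances explicitly and bounding their number, then invoking the Borel and measure-zero framework already established to conclude $\card(\E(M(\bfr)))\leq\omega$.
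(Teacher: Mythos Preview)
Your reduction to the distribution of $\fractional{\bet r_k}$ modulo $1$ is the right move and is exactly what the paper does. However, your characterization of the discrete directions is wrong, and this error propagates through the plan. The image $\Psi_\bet(M(\bfr))=\Z+\{\bet r_k:k\geq1\}$ is discrete in $\R$ if and only if the set of fractional parts $\{\fractional{\bet r_k}\}$ is \emph{finite}, not merely bounded away from $0$. (If the fractional parts take infinitely many values in, say, $[1/4,3/4]$, the image is still non-discrete.) Consequently your description of $\E'(M(\bfr))$ as ``$0$ is not an accumulation point yet the sequence fails to be dense'' is incorrect: the exceptional $\bet$ are exactly those for which $\{\fractional{\bet r_k}\}$ is infinite but not dense.

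The key structural observation you are missing is that $\DIS'(M(\bfr))$ is at most countable. The paper gets this instantly from $\PB(M(\bfr))=\emptyset$ together with \eqref{eq:PB+DIS2}; alternatively, finiteness of the fractional parts forces a rational dependence among finitely many $r_k$, giving only countably many $\bet$. Once you have this, $\E'(M(\bfr))$ and $\ND(\bfr)=\{\bet:\{\bet r_k\}\text{ not dense mod }1\}$ differ by a countable set, so $\dimh(\E(M(\bfr)))=\dimh(\ND(\bfr))$ and $\card(\E(M(\bfr)))\leq\omega$ iff $\card(\ND(\bfr))\leq\omega$.

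From this point the paper does not argue further but simply cites the literature: for (1), $\dimh(\ND(\bfr))=1$ for lacunary $\bfr$ is due to de Mathan and Pollington; for (2), $\dimh(\ND(\bfr))=0$ for sublacunary $\bfr$ is \cite[Theorem 1.3]{Bosh_sublac}; for (3), $\card(\ND(\bfr))\leq\omega$ when gaps are bounded is \cite[Proposition 1.8]{Bosh_sublac}. Your proposal to reprove these from scratch is not unreasonable, but be aware that (2) in particular is a substantial theorem, and your sketch for (3) via ``rational resonances'' is too vague---bounded gaps do not by themselves pin the orbit to a finite residue set, and the actual argument is more delicate. If you intend a self-contained proof, you should expect the bulk of the work to lie in reproducing those cited results, not in the reduction step.
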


In what follows in this section, we assume that  $\bfr=\{r_k\}_{k\geq1}$ be a rising 
to infinity sequence.  One easily verifies that\, 
$\PB(M(\bfr))=\emptyset$, and hence (see  \eqref{eq:PB+DIS2})
\begin{equation}\label{eq:disw}
\card(\DIS(M(\bfr)))\leq\omega.
\end{equation}

The following proposition follows immediately from Theorem \ref{thm:cent}.
Recall that a set is called residual if its complement is meager.
\begin{prop}\label{prop:A}
$\DEN(M(\bfr))\subset\T=[0,2\pi)$ is a residual set of full Lebesgue measure (in $\T$).
\end{prop}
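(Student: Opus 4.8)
The plan is to read off the conclusion directly from the three-set partition \eqref{eq:part} together with the two smallness statements already recorded for $M(\bfr)$. Writing $M=M(\bfr)$ for brevity, the partition \eqref{eq:part} gives
\[
\T\minus\DEN(M)=\DIS(M)\un1\E(M),
\]
so it will suffice to show that the right-hand side is simultaneously meager and of Lebesgue measure $0$. The former yields that $\DEN(M)$ is residual, and the latter that it has full measure in $\T$; thus both assertions of the proposition would follow at once.

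For the second summand I would simply invoke Theorem \ref{thm:cent}, which states that $\E(M)$ is meager with $\lam(\E(M))=0$. For the first summand I would use \eqref{eq:disw}, which asserts $\card(\DIS(M))\leq\omega$. An at most countable subset of $\T$ is a countable union of singletons, each of which is both nowhere dense and Lebesgue-null; hence $\DIS(M)$ is itself meager and satisfies $\lam(\DIS(M))=0$.

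Finally, a union of two meager sets is meager and a union of two null sets is null, so $\DIS(M)\un1\E(M)$ is meager and of measure $0$, whence its complement $\DEN(M)$ is residual and of full Lebesgue measure, as claimed. The only point requiring any attention—and it is entirely routine—is the elementary observation that a countable set is both meager and Lebesgue-null. There is no genuine obstacle here: the proposition is essentially immediate from Theorem \ref{thm:cent} and \eqref{eq:disw}, and the proof reduces to assembling these two facts through the partition \eqref{eq:part}.
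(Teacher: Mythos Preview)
Your proof is correct and is exactly the argument the paper has in mind: the paper simply states that the proposition ``follows immediately from Theorem~\ref{thm:cent}'', with the understanding that \eqref{eq:disw} (established just before) handles $\DIS(M(\bfr))$, and you have spelled out precisely this reasoning via the partition \eqref{eq:part}.
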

Taking in account \eqref{eq:DENPM}, we conclude the following.
\begin{corollary}\label{cor:1}
$\DEN'(M(\bfr))\subset\R$ is a residual set of full Lebesgue measure (in $\R$).
\end{corollary}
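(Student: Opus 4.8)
The plan is to transfer both conclusions of Proposition~\ref{prop:A} from the circle $\T$ to the line $\R$ along the tangent map, using the identity $\DEN'(M(\bfr))=\tan(\DEN(M(\bfr)))$ recorded in \eqref{eq:DENPM}. The whole content is that $\tan$ respects Baire category and Lebesgue nullity; its periodicity and its poles require only routine bookkeeping.

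First I would restrict to the open interval $I=(\pi/2,3\pi/2)\subset\T$, on which $\tan$ is defined, smooth and strictly increasing, hence a homeomorphism (in fact a real-analytic diffeomorphism) onto $\R$. Put $D:=\DEN(M(\bfr))\cap I$. Since $D\subseteq\DEN(M(\bfr))$, relation \eqref{eq:DENPM} gives $\tan(D)\subseteq\tan(\DEN(M(\bfr)))=\DEN'(M(\bfr))$, so it is enough to show that $\tan(D)$ is residual and of full measure in $\R$. Because $I$ is open in $\T$, both meagerness and nullity localize to this subspace: the set $I\minus D=I\cap(\T\minus\DEN(M(\bfr)))$ is meager and $\lam$-null by Proposition~\ref{prop:A}, so $D$ is residual and of full measure in $I$.

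Next I would push $D$ forward by the homeomorphism $\tan|_I$. Homeomorphisms preserve Baire category, so $\tan(D)$ is residual in $\R$. For the measure assertion I would verify that $\tan|_I$ sends $\lam$-null subsets of $I$ to $\lam$-null subsets of $\R$: exhausting $I$ by the compact intervals $[\pi/2+\tfrac1n,\,3\pi/2-\tfrac1n]$, on each of which $\tan$ has bounded derivative and hence is Lipschitz, the image of a null set meets each piece's image in a null set, and a countable union of null sets is null. Applying this to the null set $I\minus D$ and using that $\tan|_I$ is a bijection yields $\R\minus\tan(D)=\tan|_I(I\minus D)$, which is null; thus $\tan(D)$ has full measure in $\R$.

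Having shown $\tan(D)$ is residual and of full measure in $\R$, and since $\DEN'(M(\bfr))\supseteq\tan(D)$, the same two properties pass to the superset $\DEN'(M(\bfr))$ (its complement only shrinks), proving the corollary. The one obstacle, and a mild one, is that $\tan$ is not globally Lipschitz, its derivative blowing up at the poles $\pm\pi/2$; this is precisely what the compact-exhaustion of $I$ circumvents, the topological half being automatic since $\tan|_I$ is a homeomorphism.
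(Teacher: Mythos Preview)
Your argument is correct and follows exactly the route the paper intends: it deduces the corollary from Proposition~\ref{prop:A} via the identity \eqref{eq:DENPM}, with your only addition being the routine verification that the transfer along $\tan$ preserves residuality and full measure. The paper leaves this transfer implicit (it just says ``taking into account \eqref{eq:DENPM}''), whereas you spell out the localization to $(\pi/2,3\pi/2)$ and the compact-exhaustion trick for nullity; these details are standard and your treatment of them is clean.
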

On the other hand,  the set  $\DEN'(M(\bfr))$  may be defined in the following way:
\begin{align}
  \DEN'(M(\bfr))&=\Big\{\bet\in\R\mid\text{ the set } \big\{(n+\bet r_k)\mid n,k\in\Z, 
     k\geq1\big\}\  \text{ is dense in }\R\Big\}=\notag \\
     &=\Big\{\bet\in\R\mid\text{ the sequence } \big\{\bet r_k\big\} 
      \text{ is dense}\pmod1\Big\} \label{al:DEN},
\end{align}
Now we are ready to derive the following (known) result.
\begin{prop}\label{prop:dense}
If a set  $S\subset\R$ is unbounded, then the set                         \mbc{eq:ND}   
\[
\ND(S)=\big\{\bet\in\R\ \big|\ \bet\cdot S 
     \text{ is not dense}\h{-2}\pmod1\big\}
\]
is a meager subset of\, $\R$ of Lebesgue measure $0$.
\end{prop}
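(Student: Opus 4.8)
The plan is to reduce the statement to the density result already established for rising-to-infinity sequences, namely Corollary \ref{cor:1}, together with the reformulation \eqref{al:DEN}. First I observe that density mod $1$ is invariant under the reflection $t\mapsto -t$ (a homeomorphism of the circle $\R/\Z$), so $\bet\cdot S$ is dense mod $1$ if and only if $\bet\cdot(-S)$ is; hence $\ND(S)=\ND(-S)$. Since $S$ is unbounded, it is unbounded above or unbounded below, so after replacing $S$ by $-S$ if necessary I may assume $S$ is unbounded above. Then I would extract from $S$ a strictly increasing sequence $\bfr=\{r_k\}_{k\geq1}$ with $r_k\in S$ and $r_k\to\infty$, i.e. a rising-to-infinity sequence all of whose terms lie in $S$.

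The key point is a monotonicity of density under enlargement of a set: if $T'\subset T$ and $T'$ is dense mod $1$, then $T$ is dense mod $1$ as well. Applying this with $T'=\bet\cdot\{r_k\}$ and $T=\bet\cdot S$, I conclude that whenever the sequence $\{\bet r_k\}$ is dense mod $1$, the full set $\bet\cdot S$ is dense mod $1$. In the notation of \eqref{al:DEN} this reads
\[
\DEN'(M(\bfr))\subset\R\minus\ND(S),
\qquad\text{that is,}\qquad
\ND(S)\subset\R\minus\DEN'(M(\bfr)).
\]

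By Corollary \ref{cor:1}, $\DEN'(M(\bfr))$ is a residual subset of $\R$ of full Lebesgue measure, so its complement $\R\minus\DEN'(M(\bfr))$ is a meager set of Lebesgue measure $0$. To finish I would invoke the two hereditary facts that a subset of a meager set is meager (a subset of a nowhere dense set is nowhere dense) and that a subset of a Lebesgue-null set is again null (completeness of Lebesgue measure). Since $\ND(S)$ is contained in the meager null set $\R\minus\DEN'(M(\bfr))$, it is itself meager and of measure $0$, as claimed in Proposition \ref{prop:dense}.

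I do not anticipate a serious obstacle: the entire argument rests on the single containment $\ND(S)\subset\R\minus\DEN'(M(\bfr))$, with Corollary \ref{cor:1} supplying the substance. The only steps demanding a little care are the reduction to an unbounded-above $S$, which is handled by the reflection-invariance of density mod $1$, and the routine selection of a strictly increasing subsequence of $S$ tending to $+\infty$; neither presents any real difficulty.
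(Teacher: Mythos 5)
Your proof is correct and is essentially the paper's own argument: reduce to the case that $S$ is unbounded above, extract a rising-to-infinity sequence $\bfr$ of elements of $S$, observe via \eqref{al:DEN} that $\ND(S)\subset\R\minus\DEN'(M(\bfr))$, and conclude from Corollary \ref{cor:1}. Your only additions are making explicit the reflection-invariance justifying the reduction (which the paper dismisses with ``for definiteness'') and spelling out the hereditary properties of meager and null sets.
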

Proposition \ref{prop:dense} follows easily from classical uniform distribution results 
(see e.\,g. \cite[Ch.1, \S4, Cor.~4.3]{KN}). For a direct simple proof
see \cite[\S6]{Bosh_sublac}.   What follows is a derivation of 
Proposition \ref{prop:dense} from Theorem \ref{thm:cent}.                                  
\begin{proof}[\bf Proof of Proposition \ref{prop:dense}]
Let assume for definiteness that the set $S$ is unbounded from above.  
Then there exists a rising to infinity sequence $\bfr=\{r_k\}_{k\geq1}$  of 
positive reals in $S$.    

Let $M(\bfr)$ be defined as in \eqref{eq:mr}. 
Since the set\, $\ND(S)$ is a subset of the complement 
of the set\,  $\DEN'(M(\bfr))$ in \eqref{al:DEN}, the claim of the proposition 
follows from Corollary \ref{cor:1}. \\[-2mm]
\end{proof}
\begin{proof}[\bf Proof of Proposition \ref{prop:exmr}]
We demonstrate that Proposition \ref{prop:exmr} is just a reformulation of 
some known results on distribution mod $1$ of certain sequences of reals.
Denote 
\[
\ND(\bfr)=\Big\{\bet\in\R\ \big|\ \text{ the sequence } 
     \big\{\bet\cdot r_{k}\big\}\ \text{ is not dense}\h{-2}\pmod1\Big\}.
\]
Since $\ND(\bfr)$ is the complement of the set  $\DEN'(M(\bfr))$ in $\R$,
we obtain
\[
\E'(M(\bfr))\subset\ND(\bfr),\qquad  \ND(\bfr)\minus\E'(M(\bfr))=\DIS'(M(\bfr)).
\]
where\, $\card(\DIS'(M(\bfr)))\leq\omega$,  in view of \eqref{eq:disw}
and \eqref{eq:DISPM}.

We conclude that
\begin{subequations}
\begin{equation}\label{eq:dimh=}
\dimh(\ND(\bfr))=\dimh(\E'(M(\bfr)))=\dimh(\E(M(\bfr))),
\end{equation}
and that (see \eqref{eq:E+E})
\begin{equation}\label{eq:om}
\card(\ND(\bfr))\leq\omega \h1 \implies \h1 \card(\E'(M(\bfr)))\leq\omega
        \h1  \implies \h1 \card(\E(M(\bfr)))\leq\omega.
\end{equation}
\end{subequations}

It is known (\cite{M1}, \cite{M2}, \cite{P}) that if\, $\bfr=\{r_k\}_{k\geq1}$
is a lacunary sequence of positive integers, then the set  $\ND(\bfr)$ has full
Hausdorff dimension: $\dimh(\ND(\bfr))=1$.
The claim (1) of Proposition \ref{prop:exmr}  now follows from~\eqref{eq:dimh=}.

On the other hand, by \cite[Theorem 1.3]{Bosh_sublac} \  $\dimh(\ND(\bfr))=0$ \
for sublacinary sequences $\bfr=\{r_k\}_{k\geq1}$  rising to infinity. 
The claim (2) of Proposition \ref{prop:exmr}  also follows from~\eqref{eq:dimh=}.

Finally, the claim (3) of Proposition \ref{prop:exmr} follows from \eqref{eq:om}
and the fact that, under the current assumption that \
$\limsup_{k\to\infty}\limits\, (r_{k+1}-r_k)<\infty$, 
the set $\ND(\bfr)$ must be at most countable 
\cite[Proposition 1.8]{Bosh_sublac}. (In the special case when all  
$r_k$  are integers the fact is also proved in 
\mbox{\cite[Corollaries 42 and 43]{R}}). \\[-3mm]
\end{proof}
\section{Projections of arbitrary sets. 
          Proof of Theorem \ref{thm:cent2}}        \mbc{sec:prthm:cent2} 
The relations \eqref{eq:PUB+DEN}--\eqref{eq:PB+DIS2} mean, in particular, 
that, for discrete $M\subset\R^2$,  the partition \eqref{eq:part} coincides 
(modulo countable sets) with the partition      \mbc{eq:part2}
\begin{equation}\label{eq:part2}
   \T=\PUB(M)\un1\PB(M)\un1 \PE(M)
\end{equation}

The proof of Theorem \ref{thm:cent2} (that the set $\PE(M)$ must be small in both
metric and topological senses) is based on Theorems \ref{thm:cent} and 
Theorem \ref{thm:hmPB} which asserts stability of $\PB(M)$  under bounded 
perturbations of subsets  $M\subset\R^2$.

For $r>0$ and a set  $A\subset \R^k$,  we use the standard
notation
\begin{equation}\label{eq:neigh}
\NN(r,A)\bydef\{\bfx\in\R^k\mid \dist(A, \bfx)<r\},
\end{equation}
for the $r$-neighborhood of $A$  where  \ $\dist(A, \bfx)\bydef\inf_{a\in A}\limits ||x-a||\in[0,\infty]$.

For two subsets  $A,B\subset\R^{2}$  define \mbc{eq:hd0}
\begin{equation}\label{eq:hd0}
D_0(A,B)=\sup_{b\in B}\,\dist(A,b)=
   \inf \Big(\big\{r>0\mid B\subset\NN(r,A)\big\}\Big)\in\big[0,\infty\big].
\end{equation}

Recall that the {\em Hausdorff distance} $\hd(M_1,M_2)$ between two non-empty sets
 $M_1,M_2\in\R^2$ is defined as            \mbc{eq:hd\\thm:hmPB\\down}
\begin{equation}\label{eq:hd}
  \hd(M_1,M_2)=\max\big(D_0(M_1,M_2),\,D_0(M_2,M_1)\big)
  \in\big[0,\infty\big].
\end{equation}
\begin{thm}\label{thm:hmPB}
Assume that  $\hd(M_1,M_2)<\infty$ for two  subsets  $M_1,M_2\subset\R^2$. Then
\[
\PB(M_1)=\PB(M_2).
\]
\end{thm}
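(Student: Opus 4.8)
The plan is to exploit the one structural feature of the projections that matters here: each map $\Phi_\alp$ is $1$-Lipschitz. Indeed, by Cauchy--Schwarz, $|\Phi_\alp(\bfx)-\Phi_\alp(\bfy)|=|(\bfx-\bfy)\cdot\bfu_\alp|\le\|\bfx-\bfy\|$ for all $\bfx,\bfy\in\R^2$ and every $\alp$, since $\|\bfu_\alp\|=1$. Thus points that are close in $\R^2$ project to points that are close in $\R$, uniformly in the direction $\alp$. Since the Hausdorff distance is symmetric in its two arguments, it suffices to prove a single inclusion, say $\PB(M_1)\subset\PB(M_2)$; interchanging the roles of $M_1$ and $M_2$ then yields the reverse inclusion and hence the asserted equality.

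First I would record the hypothesis in usable form: $\hd(M_1,M_2)<\infty$ means there is a finite $r>0$ with $M_2\subset\NN(r,M_1)$ (and $M_1\subset\NN(r,M_2)$), i.e. every point of $M_2$ lies within distance $r$ of some point of $M_1$. Now fix $\alp\in\PB(M_1)$ and an arbitrary bounded set $J\subset\R$; I must show that $\Phi_\alp^{-1}(J)\cap M_2$ is bounded. Enlarge $J$ to its $r$-neighborhood $J'=\NN(r,J)\subset\R$, which is again bounded. Given any $\bfy\in M_2$ with $\Phi_\alp(\bfy)\in J$, choose $\bfx\in M_1$ with $\|\bfx-\bfy\|<r$; the Lipschitz bound gives $|\Phi_\alp(\bfx)-\Phi_\alp(\bfy)|<r$, so $\Phi_\alp(\bfx)\in J'$, that is, $\bfx\in\Phi_\alp^{-1}(J')\cap M_1$.

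Because $\alp\in\PB(M_1)$ and $J'$ is bounded, the set $\Phi_\alp^{-1}(J')\cap M_1$ is bounded, say contained in $\B^2(R)$ for some $R$. Then $\|\bfx\|\le R$, whence $\|\bfy\|\le\|\bfx\|+\|\bfy-\bfx\|<R+r$. As $\bfy$ was an arbitrary point of $\Phi_\alp^{-1}(J)\cap M_2$, this set is contained in $\B^2(R+r)$ and is therefore bounded. Hence $\alp\in\PB(M_2)$, proving $\PB(M_1)\subset\PB(M_2)$, and the symmetric argument completes the proof.

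There is essentially no serious obstacle here: once the $1$-Lipschitz property is isolated, the argument is a routine ``distance chasing'' in which the only points to watch are (i) replacing $J$ by the slightly larger bounded set $J'$ so that the nearby point $\bfx\in M_1$ still has its projection in a controlled bounded interval, and (ii) transferring boundedness from $M_1$ back to $M_2$ via the triangle inequality. The symmetry of $\hd$ is what lets me avoid repeating the argument in the other direction.
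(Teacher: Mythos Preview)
Your proof is correct and follows essentially the same route as the paper: the paper isolates the one-sided inclusion as a separate proposition (assuming only $D_0(M_1,M_2)<\infty$) and then applies it in both directions, using the same ``enlarge $J$ to $J'=\NN(v,J)$ and pull boundedness back via the triangle inequality'' argument you give. The only cosmetic difference is that you make the $1$-Lipschitz property of $\Phi_\alp$ explicit, whereas the paper encodes it in the set-theoretic inclusion $\Phi_\alp^{-1}(J)\cap\NN(v,M_1)\subset\NN\big(v,\Phi_\alp^{-1}(J')\cap M_1\big)$.
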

Theorem \ref{thm:hmPB}  follows immediately from the following     \mbc{thm:hmPB-up\\prop:hm<}
\begin{prop}\label{prop:hm<}
Assume that for some subsets  $M_1,M_2\subset\R^2$
\[
D_0(M_1,M_2)=\sup_{x\in M_2} \dist(x,M_1)<\infty.
\]
Then\,  $\PB(M_1)\subset\PB(M_2)$.
\end{prop}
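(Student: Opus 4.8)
The plan is to exploit two simple facts: the projection $\Phi_\alp$ is $1$-Lipschitz (because $\bfu_\alp$ is a unit vector, so $\abs{\Phi_\alp(\bfx)-\Phi_\alp(\bfy)}=\abs{(\bfx-\bfy)\cdot\bfu_\alp}\leq ||\bfx-\bfy||$), and the hypothesis gives a \emph{uniform} bound $D\bydef D_0(M_1,M_2)<\infty$ on the distance from each point of $M_2$ to $M_1$. Combining these, a point of $M_2$ lying over a bounded target must sit over a slightly larger bounded target after being nudged to a nearby point of $M_1$, and then P-boundedness of $M_1$ finishes the job.

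Concretely, I would first fix $\alp\in\PB(M_1)$ and an arbitrary bounded set $J\subset\R$, choosing $R>0$ with $J\subseteq(-R,R)$; the goal is to show $(\Phi_\alp)^{-1}(J)\cap M_2$ is bounded. For each $\bfx\in M_2$ with $\Phi_\alp(\bfx)\in J$, the inequality $\dist(\bfx,M_1)\leq D$ lets me pick a companion point $\bfy\in M_1$ with $||\bfx-\bfy||<D+1$, the extra $1$ absorbing the possibility that the infimum defining $\dist(\bfx,M_1)$ is not attained. The Lipschitz estimate then yields $\abs{\Phi_\alp(\bfy)-\Phi_\alp(\bfx)}<D+1$, so that $\Phi_\alp(\bfy)$ lies in the fixed enlarged bounded interval $J'\bydef(-R-D-1,\,R+D+1)$, which crucially does not depend on $\bfx$. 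Hence $\bfy\in(\Phi_\alp)^{-1}(J')\cap M_1$, and since $\alp\in\PB(M_1)$ this set is bounded, say contained in $\B^2(S)$. The triangle inequality $||\bfx||\leq||\bfy||+||\bfx-\bfy||<S+D+1$ then confines every such $\bfx$ to $\B^2(S+D+1)$, proving that $(\Phi_\alp)^{-1}(J)\cap M_2$ is bounded and therefore $\alp\in\PB(M_2)$.

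As for the main difficulty: there is essentially none, the whole argument being a one-line Lipschitz estimate followed by the triangle inequality. The single point that must be handled with care is keeping the enlarged target $J'$ \emph{uniform} in $\bfx$; this is exactly where the finiteness of $D_0(M_1,M_2)$ as a genuine supremum (rather than mere pointwise finiteness of the distances) is essential, since it is what permits a single application of the hypothesis $\alp\in\PB(M_1)$ to the one fixed set $J'$.
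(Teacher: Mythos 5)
Your proof is correct and follows essentially the same route as the paper's: both enlarge the target interval $J$ to $J'$ by the uniform distance bound (your $D+1$ playing the role of the paper's $v>D_0(M_1,M_2)$), use the $1$-Lipschitz property of $\Phi_\alp$ to place the companion $M_1$-points over $J'$, and then invoke $\alp\in\PB(M_1)$ once for the fixed set $J'$. The only difference is presentational: the paper packages the argument as set inclusions involving neighborhoods $\NN(v,\cdot)$, while you argue point by point with explicit constants.
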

\begin{proof}[Proof of Proposition \ref{prop:hm<}] 
Denote $u=D_0(M_1,M_2)$ and select any $v>u\geq0$.
Then $M_2\subset \NN(v,M_1)$.

Given $\alp\in\PB(M_1)$, we have to show that  $\alp\in\PB(M_2)$. This is to say that,
for every bounded interval $J=(a,b)\subset\R$, the set  
$K=(\Phi_{\alp})^{-1}(J)\cap M_2$  is bounded in $\R$.

Denote by $J'$  the open interval  $J'=\NN(v,J)=(a-v,b+v)$.
Then we have  (see notation \eqref{eq:neigh})
\begin{align*}
K\,=\,(\Phi_{\alp})^{-1}(J)\cap M_2&\subset\,(\Phi_{\alp})^{-1}(J)\cap
   \NN(v,M_1)\subset \\
   &\subset\,\NN(v,(\Phi_{\alp})^{-1}(J')\cap M_1)\bydef L.
\end{align*}
Since $\alp\in\PB(M_1)$, the set  $P=(\Phi_\alp)^{-1}(J')\cap M_1$  is bounded.
It follows that the sets $L=\NN(v,P)$ and $K\subset L$ are also bounded, 
completing the proof of Proposition \ref{prop:hm<}.\\[-2mm]
\end{proof}
\begin{proof}[Proof of Theorem \ref{thm:hmPB}]
Since $\dh(M_1,M_2)<\infty$, both sets $D_0(M_1,M_2)$ and $D_0(M_2,M_1)$
are finite. Apply Proposition \ref{prop:hm<}.\\[-2mm]
\end{proof}
\begin{proof}[Proof of Theorem \ref{thm:cent2}]
There exists a subset $M'\subset M$ such that $M\subset\NN(1,M')$ and 
$M'$  is discrete in~$\R^2$. (Take a subset $M'\subset M$ which is maximal 
under the constraint that the distances between any distinct points of $M'$  
are\,  $\leq1$. 

Let $U=\DIS(M')\minus\PB(M')$. In view of \eqref{eq:PB+DIS} and \eqref{eq:PB+DIS2}, we have
\[
\card(U)\leq\omega \qquad \text{\small and} \qquad \PB(M')=\DIS(M')\minus U.
\]
On the other hand, it follows from \eqref{eq:PUB+DEN} and the inclusion 
$M'\subset M$ that 
\[
\DEN(M')=\PUB(M')\subset \PUB(M).
\]
We obtain (see \eqref{eq:pem}):
\[
\PE(M)=\T\minus\big(\PB(M)\un1 \PUB(M)\big)\subset
       \T\minus\Big(\big(\DIS(M)\minus U\big)\un1 \DEN(M')\Big)
       \subset\E(M')\cup U.
\]
The claim of Theorem \ref{thm:cent2} follows from Theorem \ref{thm:cent} 
and the fact that  $\card(U)\leq\omega$. \\[-1mm]
\end{proof}
The following theorem summarizes the results on stability of the partitions 
\eqref{eq:part}  and  \eqref{eq:part2} (see notation following Remark \ref{rem:hd}),
under bounded perturbations of a set $M$.
Recall that \ $\hd(\,\cdot\,,\,\cdot\,)$ \ stands for the Haudorff distance between sets,
see \eqref{eq:hd}.

\begin{thm}[Summary]\label{thm:sum}
Assume that  $\hd(M_1,M_2)<\infty$,  for subsets  $M_1,M_2\subset\R^2$.
Then:

\h8{\bf (1) }$\PB(M_1)=\PB(M_2)$;

\h8{\bf (2) }$\lam(\PE(M_i))=0$, for both $i=1,2$;

\h8{\bf (3) }$\PUB(M_1)\equiv\PUB(M_2) \pmod \lam$;

If, moreover, both $M_1,M_2$ are discrete then

\h8{\bf (4) }$\DIS(M_1)\equiv\DIS(M_2)\equiv\PB(M_2) \pmod \omega$;

\h8{\bf (5) }$\DEN(M_1)\equiv\DEN(M_2)\equiv\PUB(M_2) \pmod \lam$.
\end{thm}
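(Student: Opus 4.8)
The plan is to derive all five assertions by assembling facts already established, since this ``summary'' theorem merely collects consequences of Theorems~\ref{thm:hmPB}, \ref{thm:cent}, and~\ref{thm:cent2} together with the relations \eqref{eq:PUB+DEN}--\eqref{eq:PB+DIS2}. I would treat the claims in the stated order, because the discrete assertions~(4) and~(5) reuse the general ones. Throughout I write $B$ for the common $P$-bounded set furnished by claim~(1).

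Claim~(1) is precisely Theorem~\ref{thm:hmPB} under the hypothesis $\hd(M_1,M_2)<\infty$, so I would simply invoke it and set $B=\PB(M_1)=\PB(M_2)$. Claim~(2) is immediate from Theorem~\ref{thm:cent2}, which yields $\lam(\PE(M))=0$ for every subset $M\subset\R^2$; I apply it to $M_1$ and $M_2$ separately.

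For claim~(3) I would first record that $\PB(M)\cap\PUB(M)=\emptyset$ for every $M$, since testing both defining conditions against a single bounded open interval forces the same preimage to be simultaneously bounded and unbounded. Combined with \eqref{eq:pem}, this gives $\T\minus\PUB(M_i)=\PB(M_i)\cup\PE(M_i)=B\cup\PE(M_i)$. Now take $\alp\in\PUB(M_1)\minus\PUB(M_2)$: then $\alp\in B\cup\PE(M_2)$, while $\alp\in\PUB(M_1)$ forces $\alp\notin B$, so $\alp\in\PE(M_2)$; the symmetric inclusion holds as well. Hence
\[
\PUB(M_1)\triangle\PUB(M_2)\subset\PE(M_1)\cup\PE(M_2),
\]
which has Lebesgue measure $0$ by~(2), giving $\PUB(M_1)\equiv\PUB(M_2)\pmod\lam$.

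In the discrete case, claims~(4) and~(5) follow by combining the above with the discrete-specific relations. For~(4), the inclusion \eqref{eq:PB+DIS} and the countability bound \eqref{eq:PB+DIS2} give $\DIS(M_i)\equiv\PB(M_i)\pmod\omega$ for each $i$; since $\PB(M_1)=\PB(M_2)=B$ by~(1), transitivity of congruence modulo a countable set yields $\DIS(M_1)\equiv\DIS(M_2)\equiv\PB(M_2)\pmod\omega$. For~(5), the identity \eqref{eq:PUB+DEN} reads $\DEN(M_i)=\PUB(M_i)$ for discrete $M_i$, so claim~(3) transfers verbatim to $\DEN(M_1)\equiv\DEN(M_2)\equiv\PUB(M_2)\pmod\lam$. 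Because each step is a direct consequence of a previously proved fact, I expect no genuine obstacle; the one place needing slight care is the symmetric-difference bookkeeping in~(3), where one must check that the common set $B$ cancels and leaves only the two measure-zero exceptional sets.
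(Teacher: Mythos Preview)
Your proposal is correct and follows essentially the same approach as the paper's own proof, which simply notes that (1) and (2) are Theorems~\ref{thm:hmPB} and~\ref{thm:cent2}, that (3) follows from (1), that (4) follows from (1) together with \eqref{eq:PB+DIS}--\eqref{eq:PB+DIS2}, and that (5) follows from (3) and \eqref{eq:PUB+DEN}. The only difference is that you spell out the symmetric-difference argument for (3) explicitly, which the paper leaves implicit.
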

\begin{proof}[Proof of Theorem \ref{thm:sum}]
{\bf (1)} and {\bf (2)} are exactly Theorems \ref{thm:hmPB} and \ref{thm:cent2}, 
respectively. {\bf (3)} follows from {\bf (1)}. Finally, {\bf (4)} follows from
{\bf (1)}, \eqref{eq:PB+DIS} and \eqref{eq:PB+DIS2},  and {\bf (5)} follows from
{\bf (3)} and \eqref{eq:PUB+DEN}.\\[-2mm]
\end{proof}
We derive the following corollary for projections of syndetic subset of $\R^2$.
A set $M\subset\R^2$ is called {\em syndetic}\, if  $\dh(M,\R^2)=D_0(M,\R^2)<\infty$
(see \eqref{eq:hd0} and \eqref{eq:hd}).          \mbc{prop:syn}
\begin{prop}\label{prop:syn}
For a syndetic subset $M\subset\R^2$,  one has
\[
\PUB(M)\equiv\DEN(M)\equiv \T=[0,2\pi).
\]
\end{prop}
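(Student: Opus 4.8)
The plan is to compare $M$ with the full plane $\R^2$. Since $M$ is syndetic we have $\hd(M,\R^2)=D_0(M,\R^2)<\infty$, so $M$ and $\R^2$ lie at finite Hausdorff distance. For the full plane the relevant sets are transparent: given any direction $\alp$ and any nonempty bounded interval $J\subset\R$, the preimage $(\Phi_\alp)^{-1}(J)$ is a nonempty strip, hence unbounded in $\R^2$, so no direction can be $P$-bounded. Thus $\PB(\R^2)=\emptyset$, and consequently $\PUB(\R^2)=\DEN(\R^2)=\T$. The idea is to transport the vanishing of $\PB$ from $\R^2$ to $M$ and then read off the conclusion for $\PUB(M)$ and $\DEN(M)$ from the general machinery.

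First I would invoke Theorem~\ref{thm:hmPB}: since $\hd(M,\R^2)<\infty$, we get $\PB(M)=\PB(\R^2)=\emptyset$. Because $\PB(M)$, $\PUB(M)$, $\PE(M)$ partition $\T$ (by \eqref{eq:pem}, together with the obvious disjointness of $\PB$ and $\PUB$, witnessed by any bounded open interval), the vanishing of $\PB(M)$ forces $\PUB(M)=\T\minus\PE(M)$. By Theorem~\ref{thm:cent2} the set $\PE(M)$ is meager and of Lebesgue measure $0$, whence $\PUB(M)\equiv\T$ in both the metric and the topological senses.

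Next I would handle $\DEN(M)$ via the always-valid inclusion $\PUB(M)\subset\DEN(M)$: if $\alp\in\PUB(M)$ then $(\Phi_\alp)^{-1}(J)\cap M$ is nonempty for every open nonempty $J$, so $\Phi_\alp(M)$ meets every open interval and is therefore dense, i.e. $\alp\in\DEN(M)$. Combined with $\DEN(M)\subset\T$, this gives $\T\minus\DEN(M)\subset\T\minus\PUB(M)=\PE(M)$, so that $\DEN(M)\equiv\T$ as well, and in particular $\PUB(M)\equiv\DEN(M)$, all modulo the meager null set $\PE(M)$.

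The only genuinely delicate point is that $M$ need not be discrete, so the clean identity $\PUB=\DEN$ of \eqref{eq:PUB+DEN} is not directly available; however, only the one-sided inclusion $\PUB(M)\subset\DEN(M)$ is needed above, and this holds for arbitrary $M$. I would also stress that the equivalences are genuinely ``modulo $\PE(M)$'' rather than exact equalities — already for $M=\Z^2$ the rational directions lie in neither $\PUB$ nor $\DEN$ — so the role of Theorem~\ref{thm:cent2} is precisely to guarantee that this exceptional discrepancy is simultaneously meager and Lebesgue-null.
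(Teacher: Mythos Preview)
Your proof is correct and follows essentially the same route as the paper: both arguments compare $M$ with $\R^2$ via the finite Hausdorff distance, use $\PB(\R^2)=\emptyset$ together with Theorem~\ref{thm:hmPB} (equivalently Theorem~\ref{thm:sum}(1)) to get $\PB(M)=\emptyset$, and then invoke Theorem~\ref{thm:cent2} to conclude $\PUB(M)\equiv\T$. The only minor difference is in handling $\DEN(M)$: the paper passes to a discrete syndetic subset of $M$ in order to use the identity $\PUB=\DEN$ from \eqref{eq:PUB+DEN}, whereas you bypass that reduction by observing the one-sided inclusion $\PUB(M)\subset\DEN(M)$, which holds for arbitrary $M$ and is all that is needed here.
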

\begin{proof}
By Theorem \ref{thm:sum}, {\bf (3) }, $\T=\PUB(\R^{2})\equiv\PUB(M) \pmod \lam$
whence $\DEN(M)=\T\pmod\lam$, in view of \eqref{eq:PUB+DEN}.
(We may assume that $M$ is discrete because otherwise one replaces $M$ with its syndetic
discrete subset.\\[-2mm]
\end{proof}
\begin{rem}
One easily verifies that, for syndetic subsets  $M\subset\R^2$, one has  
$\PB(M)=\emptyset$, and that $\card(\DIS(M)\leq\omega$. There are examples of 
discrete sundetic subsets  $M\subset\R^2$ for which $\hd(E(M)=1$. On the other hand,  
one can prove that, for every discrete sundetic subsets  $M\subset\R^2$, 
the exceptional set  $\E(M)$  is a countable union of sets of the box dimension $<1$. 
(In fact,  then  $\E(M)$ must be a countable union of perforated sets in the sense of
\cite[\S3]{Bosh_sublac}). Under the additional assumption on $M$ to be periodic
(there exists $\bfx\in\R^2\minus\{0\}$  such that $M+\bfx=M$), $\card(\E(M))\leq\omega$. 
\end{rem}
\section{Some questions}\label{sec:q}

Denote $\R^+=[0,\infty)$. Given a continuous function $g\colon \R^+\to\R^+$, a subset
$M\subset\R^2$ is said to be $g$-syndetic if the set
\[
{\bf F}_g=\big\{\bfx\in\R^2\mid \dist(\bfx,M)\geq g(|\bfx|)\big\}
\]
is bounded. If the relation $\DEN(M)\equiv\T\pmod\lam$  holds for all $g$-syndetic
discrete subsets  $M\subset\R^2$, then the function $g$  is called  P-negligible.
By Proposition \ref{prop:syn}, bounded $g$ must be   P-negligible.
\begin{question}
Does there exist  P-negligible function  $g\colon \R^+\to\R^+$  such that
$\lim_{x\to\infty}\limits g(x)=\infty$?
\end{question}
One can speculate that $g(x)=\sqrt x$ \ is   P-negligible. This is in agreement with
fact that the set  
\[
M=\big\{(\pm m^2,\pm n^2)\mid m,n\in\N\big\}
\]
satifies $\DEN(M)\equiv\T\pmod\lam$ (cf. Proposition \ref{prop:difsqrs} and its proof in 
Section~\ref{sec:exhd1}). On the other hand, the functions  $g(x)=x^{a}$  with $a>1/2$  
fail to be P-negligible as it follows from the following proposition.
\begin{prop}\label{prop:apm}
For $a>0$, denote \ $M(a)=\big\{ (\pm m^a, \pm n^a)\mid m,n\in\N \big\}\subset\R^2$.
Then
\[
\DIS(M(a))\equiv\T\pmod\lam, \qquad \text{for }\, u>2,
\]
and 
\[
\DEN(M(a))\equiv\T\pmod\lam, \qquad \text{for }\, u\leq2.
\]
\end{prop}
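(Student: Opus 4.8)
The plan is to pass to the maps $\Psi_\bet$, exploit the sign symmetry of $M(a)$, and reduce the entire statement to a single metric Diophantine approximation problem for $\gam=\bet^{1/a}$, after which the two cases fall out of the convergence and divergence halves of Khinchin's theorem. First I would reduce the sign structure. Since $M(a)=\{(\pm m^a,\pm n^a)\}$ is symmetric, one checks directly that $\Psi_{-\bet}(M(a))=\Psi_\bet(M(a))$, so it suffices to treat $\bet>0$. For such $\bet$ we have $\Psi_\bet(M(a))=\{\pm m^a\pm\bet n^a\mid m,n\in\N\}$, and the only sign pattern that can place points in a fixed bounded interval $(-R,R)$ is $\pm(m^a-\bet n^a)$, since $m^a+\bet n^a\ge 1+\bet$ escapes to $+\infty$. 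Thus, modulo a countable union over $R\in\N$, everything is governed by the number of pairs $(m,n)$ with $|m^a-\bet n^a|\le R$. Writing $\gam=\bet^{1/a}$ (a smooth change of variable on $(0,\infty)$, preserving Lebesgue-null sets) and applying the mean value theorem, $|m^a-\bet n^a|=|m^a-(\gam n)^a|\asymp(\gam n)^{a-1}\,|m-\gam n|$ for $m$ near $\gam n$. Hence for each fixed $R$ and all large $n$ the condition $|m^a-\bet n^a|\le R$ forces $m$ to be the nearest integer to $\gam n$ and is equivalent to $\fractional{n\gam}\le C(\gam,R)\,n^{1-a}$. The question has become: for how many $n$ does $\fractional{n\gam}$ beat the threshold $\psi(n)=C\,n^{-(a-1)}$?

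For $a>2$ I would invoke the convergence half. Since $\sum_n n^{-(a-1)}<\infty$ exactly when $a>2$, the Borel--Cantelli lemma (equivalently, the convergence Khinchin theorem, $\psi$ being monotone) shows that for almost every $\gam$ the inequality $\fractional{n\gam}\le C\,n^{1-a}$ holds for only finitely many $n$. Consequently each interval $(-R,R)$ contains only finitely many points of $\Psi_\bet(M(a))$; taking the union over $R\in\N$ shows $\Psi_\bet(M(a))$ is discrete for almost every $\bet>0$, that is, $\DIS(M(a))\equiv\T\pmod\lam$.

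For $a\le 2$ I would invoke the divergence half together with the dichotomy. Now $\sum_n n^{-(a-1)}=\infty$, so for almost every $\gam$ one has $\fractional{n\gam}<n^{1-a}/\log n$ for infinitely many $n$; for each such $n$ the corresponding value $v_n=m^a-\bet n^a$ satisfies $|v_n|\asymp(\gam n)^{a-1}\fractional{n\gam}\to 0$. Discarding the measure-zero set of $\bet$ of the form $(p/q)^a$, none of these values is $0$, so $\Psi_\bet(M(a))$ has $0$ as an accumulation point and is therefore not discrete. Since by Theorem~\ref{thm:cent} the exceptional set has Lebesgue measure $0$, almost every such direction must lie in $\DEN(M(a))$, giving $\DEN(M(a))\equiv\T\pmod\lam$.

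The main obstacle is the uniformity and bookkeeping at the boundary $a=2$: there the divergence is only logarithmic, so I must select the approximating function just below $n^{-1}$ and verify the monotonicity hypothesis of Khinchin's theorem, and I must confirm that the accumulating values $v_n$ are genuinely infinite and distinct, ruling out infinitely many coincidences $m^a-\bet n^a=v$. As a cleaner alternative for the single case $a=2$, one can bypass Khinchin entirely: since $M_0=\{(m^2,n^2)\}\subset M(2)$, Lemma~\ref{lem:noDIS} and the proof of Proposition~\ref{prop:difsqrs} already give $\lam(\DIS'(M_0))=0$, and monotonicity of $\DEN$ under inclusion then yields $\DEN(M(2))\equiv\T\pmod\lam$ directly.
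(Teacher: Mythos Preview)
Your proposal is correct and follows essentially the same approach as the paper's (one-line) sketch: reduce via the substitution $\gamma=\bet^{1/a}$ to the Khinchin-type dichotomy for the inequality $n^{a-1}\fractional{n\gamma}<C$ (this is exactly the paper's ``$m^{a-1}\fractional{mt}<1$'' criterion, and your mean-value-theorem estimate is the general-$a$ analogue of the factorization in Lemma~\ref{lem:noDIS}), and then invoke Theorem~\ref{thm:cent} to pass from ``not discrete'' to ``dense'' almost everywhere. Your stated obstacle about coincidences $m^a-\bet n^a=v$ is not a genuine issue: once $v_n\neq0$ and $v_n\to0$, the values must assume infinitely many distinct reals, so non-discreteness follows without further bookkeeping.
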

The proof of Theorem \ref{prop:apm} is based on Theorem \ref{thm:cent} and the fact
that, for Lebsgue almost all $t\in\R$, the inequality\,  $m^{a-1}\fractional{mt}<1$ 
has a finite or infinite number of solutions $m\in\Z$  depending on whether or 
not  $a>2$  (cf. proof of Lemma \ref{lem:noDIS} in Section \ref{sec:exhd1}).

\begin{definition}[Notation]
Denote by $\MM$  the family of measurable subsets  $A\subset\T$  for which there 
exists a set  $M\subset\R^2$  such that \ $\DEN(M)\equiv A\pmod\lam$ \ (see 
notation following Remark \ref{rem:hd}).
\end{definition}

Note that requiring sets  $M$ to be discrete (in the above definition) would not 
affect the defined family $\MM$ because of {\bf (5)} in Theorem \ref{thm:sum}).

The problem of characterization of sets in the family $\MM$  is open. 
Clearly, a set $A\in\MM$  must be Lebesgue measurable and  $\pi$-periodic 
(which means $A+\pi\equiv A\pmod\lam)$. 

\begin{question}
Does the family $\MM$ coincide with the family of all  $\pi$-periodic measurable
subsets of $T$?
\end{question}
We observe that any $\pi$-periodic finite union $A$ of subintervals of $\T$
must lie in $\MM$. One just takes  
\[
M_1=\Big\{(x_1,x_2)\in\R^2\ \Big|\ x_1\neq0, \h2 \frac{x_2}{x_1}\in A\Big\}
\]
(or  $M_2=M_1\cap(\Z\times\Z)$  to make the set $M$ discrete).

\section{Multidimensional extensions}\label{sec:mult}
Main results of the paper (Theorems \ref{thm:cent}, \ref{thm:cent2} and \ref{thm:sum})
extend to all dimensions $n\geq2$. The following is a multidimensional version of 
Theorem \ref{thm:cent}.
\begin{thm}\label{thm:mult1}
Let  $n>k\geq1$ be integers and let $M\subset\R^n$  be a discrete subset. Then,
for Lebesgue almost all \ $k$-planes \  $U\subset\R^n$ 
(in the sense of the natural\, $k(n-k)$-Lebesgue measure
on the Grassmannian\, {\bf GR}$(k,n)$)
  the projection of\, $M$ on\, $U$
is either dense in\, $U$, or discrete in\, $U$.
\end{thm}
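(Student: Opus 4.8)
The plan is to mimic the one-dimensional argument behind Theorems \ref{thm:borelp} and \ref{thm:ep}, replacing the affine coordinate $\bet$ on the space of directions by a matrix coordinate on a full-measure chart of the Grassmannian, and replacing the maps $\Psi_\bet\colon\R^2\to\R$ by the linear surjections
\[
L_B\colon\R^n\to\R^k,\qquad L_B(\bfx)=\bfx'+B\bfx'',\qquad \bfx=(\bfx',\bfx'')\in\R^k\times\R^{n-k},
\]
indexed by $k\times(n-k)$ matrices $B\in\R^{k(n-k)}$. First I would record the reduction. The kernel $V_B=\ker L_B$ is the $(n-k)$-plane $\{(-B\bfx'',\bfx'')\mid\bfx''\in\R^{n-k}\}$; the map $B\mapsto V_B^{\perp}$ is a smooth diffeomorphism from $\R^{k(n-k)}$ onto the (open, dense, full-measure) set of $k$-planes transverse to $\{0\}\times\R^{n-k}$; and $L_B$ restricts to a linear isomorphism $V_B^{\perp}\to\R^k$. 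Hence $L_B(M)=(L_B|_{V_B^{\perp}})\big(\pi_{V_B^{\perp}}(M)\big)$, so $\pi_U(M)$ is dense (resp.\ discrete) in $U=V_B^{\perp}$ iff $L_B(M)$ is dense (resp.\ discrete) in $\R^k$. Since the natural measure on $\mathbf{GR}(k,n)$ is equivalent to Lebesgue measure in any smooth chart, and the non-transverse planes form a proper subvariety (a null, nowhere-dense set), it suffices to prove that for Lebesgue-almost every $B$ the set $L_B(M)$ is dense or discrete in $\R^k$.

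Next I would reformulate the exceptional set of matrices exactly as in the proof of Theorem \ref{thm:ep}. Let $\Sigma_k$ denote the countable family of rational boxes (products of open rational intervals) in $\R^k$, and for $P,Q\in\Sigma_k$ set
\[
V(P,Q)=\big\{B\in\R^{k(n-k)}\ \big|\ L_B(M)\cap P\text{ is infinite and }L_B(M)\cap Q=\emptyset\big\}.
\]
A matrix $B$ is exceptional precisely when $L_B(M)$ is neither discrete (some bounded rational box $P$ meets it in an infinite set) nor dense (some rational box $Q$ misses it), so the exceptional set is $\bigcup_{P,Q\in\Sigma_k}V(P,Q)$. It then remains to show each $V(P,Q)$ is nowhere dense and Lebesgue-null; in particular the exceptional set lies in a null meager set, which settles measurability issues for the ``almost all'' conclusion.

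For the core estimate, fix $B\in V(P,Q)$. Infiniteness gives distinct $\bfz_j=(\bfz_j',\bfz_j'')\in M$ with $L_B(\bfz_j)\in P$; since $M$ is discrete $\|\bfz_j\|\to\infty$, and as $L_B(\bfz_j)$ stays in the bounded box $P$ one checks $\|\bfz_j''\|\to\infty$. Put $\eps_j=1/\|\bfz_j''\|\to0$ and $\hat v_j=\eps_j\bfz_j''$. For a perturbation $B'=B+\Gamma'$ one has $L_{B'}(\bfz_j)=L_B(\bfz_j)+\Gamma'\bfz_j''$, and the substitution $\Gamma'=\eps_j\Gamma$ turns this displacement into $\Gamma\hat v_j$. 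Thus, after rescaling the ball $B(B,C\eps_j)$ to the fixed ball $\{\|\Gamma\|\le C\}$ (uniform scaling preserves relative measure), the set of $B'\in B(B,C\eps_j)$ with $L_{B'}(\bfz_j)\in Q$ corresponds to $\{\Gamma:\|\Gamma\|\le C,\ \Gamma\hat v_j\in Q-L_B(\bfz_j)\}$. Choosing $C=C(P,Q)$ so large that the radius-$C$ ball in $\R^k$ contains $Q-p$ for every $p\in\overline P$, the rank-one perturbations $\Gamma=w\,\hat v_j^{\,T}$ (which realize $\Gamma\hat v_j=w$) show this set is nonempty and open, and a compactness argument over the unit direction $\hat v_j$ and the base point $L_B(\bfz_j)\in\overline P$ yields relative measure bounded below by a constant $c=c(P,Q,k,n)>0$ independent of $j$. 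Every such $B'$ satisfies $L_{B'}(\bfz_j)\in Q$, so $L_{B'}(M)\cap Q\neq\emptyset$ and $B'\notin V(P,Q)$. Hence arbitrarily small balls about $B$ contain an open subset of relative density $\ge c$ disjoint from $V(P,Q)$: by the Lebesgue density theorem $\lam(V(P,Q))=0$, and the open escape sets make $V(P,Q)$ nowhere dense. Summing over $P,Q\in\Sigma_k$ and transporting to $\mathbf{GR}(k,n)$ through the chart completes the proof.

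The hard part is the uniform lower bound $c>0$ on the relative measure of the escape region. In the planar case ($k=n-k=1$) the preimage of $Q$ under the perturbation is itself an interval and the ratio is the explicit constant $q/2d$ of Theorem \ref{thm:ep}; in higher dimensions $\{\Gamma:\Gamma\hat v_j\in Q-L_B(\bfz_j)\}$ is a codimension-$k$ slab rather than a box, so one must control its intersection with the ball \emph{uniformly} as the unit direction $\hat v_j$ and the base point range over compact sets. This is precisely where the rescaling-plus-compactness step does the work that was free in dimension two.
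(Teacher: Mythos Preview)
The paper does not include a proof of Theorem \ref{thm:mult1}; it only remarks that ``the basic idea is the same'' as for Theorem \ref{thm:cent} and defers the details. Your proposal is precisely that: the natural higher-dimensional version of the proof of Theorem \ref{thm:ep}, with the scalar slope $\bet$ replaced by the matrix chart $B\in\R^{k(n-k)}$ on the Grassmannian and the same decomposition $\E'=\bigcup_{P,Q}V(P,Q)$ over rational boxes. So there is nothing to compare against, only to check.

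The argument is sound. The reduction via $L_B(\bfx)=\bfx'+B\bfx''$ is correct (the chart covers a full-measure open set, and $L_B$ differs from the orthogonal projection onto $V_B^\perp$ by a linear isomorphism). Measurability of each $V(P,Q)$ follows as in the proof of Theorem \ref{thm:borelp}: $\{B:L_B(M)\cap Q=\emptyset\}$ is closed and $\{B:L_B(M)\cap P\text{ infinite}\}$ is $G_\delta$, since $M$ is countable. The key step---that $\|\bfz_j''\|\to\infty$---holds for every $B$ (including $B=0$), because boundedness of $\bfz_j''$ together with $L_B(\bfz_j)\in P$ would force $\bfz_j'$ bounded too.

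Your ``hard part'' is indeed the only new content, and your compactness sketch goes through. One clean way to make it explicit: use the Frobenius norm on $\R^{k(n-k)}$, which is invariant under $\Gamma\mapsto\Gamma O$ for orthogonal $O\in O(n-k)$. Rotating $\hat v_j$ to $e_1$ shows the relative measure of $\{\|\Gamma\|\le C,\ \Gamma\hat v_j\in Q-p\}$ is independent of $\hat v_j$ and depends only on $p\in\overline P$; continuity in $p$ then follows by dominated convergence (the boundary $\{\Gamma:\Gamma e_1\in\partial Q-p\}$ is null), and positivity by your rank-one perturbation. This yields the uniform $c=c(P,Q,k,n)>0$, and the Lebesgue density theorem finishes exactly as in the planar case.
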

The proof of Theorem \ref{thm:mult1} is more complicate and longer than that of 
Theorem \ref{thm:cent},  and it is not included (even though the basic idea is the same).
It will be published elsewhere in the case an application of Theorem~\ref{thm:mult1} 
justifying the length of the proof will be found.

The statements of the multidimensional versions of Theorems \ref{thm:cent2} and \ref{thm:sum}
are straightforward, and we omit these.

\section{Projection of random sets in $\R^2$} We conclude the paper by formulating 
results (also without proofs) on the generic size of the sets $\DIS(M)$, $\DEN(M)$
and $\E(M)$, for random subset  $M\subset\R^2$  of given density. 
We consider two settings.

\subsection{First setting} Denote by $\boldsymbol\alp$ the sequence $\{\alp_k\}$ of 
independent random variables, each uniformly distributed in $\T=[0,2\pi)$.
For every increasing to infinity sequence $\bfr=\{r_k\}_{k\geq1}$\,  of positive numbers,
consider the random set\,  $M=\{\bfx_k\mid k\geq1\}$ of points in\, $\R^2$ where
\[
\bfx_k=(r_k\cos \alp_k,r_k\sin \alp_k)\in\R^2
\]
is the point with polar coordinates $(r_k, \alp_k)$. Thus the points $\bfx_k$, 
$k\geq1$, are selected independently, each point  $\bfx_k$  being picked up 
randomly on the circle\,  $\big\{\bfx\in\R^2\ \big|\ ||x||=r_k\big\}$.

The following theorem describes the generic size of the sets\, $\DIS(M)$, $\DEN(M)$ 
and $\E(M)$\,  for the discrete random set\,  $M=\{\bfx_k\mid k\geq1\}\in\R^2$. 
A statement is said to be satisfied a.\,s. (almost sure) if it holds for almost all
choices of the sequence $\boldsymbol \alp$.
\begin{thm}
Given an increasing to infinity sequence $\bfr=\{r_k\}_{k\geq1}$\, of positive numbers,  
the following statements take place:
\begin{enumerate}
\item[\em(1)] $\DEN(M)$ is a residual subset of\, $\T$, a.\,s.\\
\item[\em(2)] The relation \ $\DEN(M)=\T\pmod\lam$ a.\,s. takes place 
 if and only if \ $\sum_{k\geq1}\frac1{r_{k}} =\infty$.\\

\item[\em(3)] The relation \ $\DIS(M)=\T\pmod\lam$ a.\,s. takes place 
 if and only if \ $\sum_{k\geq1}\frac1{r_{k}} <\infty$.\\

\item[\em(4)] The relation \ $\DIS(M)=\T$ a.\,s. takes place 
 if and only if \ $\limsup_{N\to\infty}\limits\,
              \Big(\sum_{k=1}^N\limits\,\dfrac1{r_{k}\cdot\,\log N}\Big)=\infty$.
              
\end{enumerate} 
\end{thm}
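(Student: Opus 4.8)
The plan is to reduce the whole theorem to the identity $\Phi_\alp(\bfx_k)=r_k\cos(\alp_k-\alp)$ together with one elementary estimate. For a bounded interval $J\subset\R$ the probability $p_k(J)=\mathbb{P}\big(\Phi_\alp(\bfx_k)\in J\big)$ does not depend on $\alp$ (the law of $\alp_k$ is rotation invariant) and satisfies $p_k(J)\asymp |J|/r_k$ as $k\to\infty$, arising from the two level crossings $r_k\cos\theta\in J$ at each of which $\big|\tfrac{d}{d\theta}\cos\theta\big|\to1$. Since the $\alp_k$ are independent, for each fixed $\alp$ the events $\{\Phi_\alp(\bfx_k)\in J\}$ are independent and $\sum_k p_k(J)\asymp|J|\sum_k r_k^{-1}$, so both Borel--Cantelli lemmas are available with the series $\sum_k r_k^{-1}$ as the deciding quantity.

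For (1) I would use $\DEN(M)=\bigcap_J\{\alp\mid\Phi_\alp(M)\cap J\neq\emptyset\}$, the intersection over bounded rational $J$ of open sets, so that $\DEN(M)$ is automatically a $G_\delta$; it then suffices to show that almost surely each of these open sets is dense. Fixing a rational arc $I\subset\T$ and a rational bounded $J$, the event $B_k(I,J)=\{\exists\,\alp\in I:\Phi_\alp(\bfx_k)\in J\}$ depends only on $\alp_k$ and has $\mathbb{P}(B_k(I,J))\geq c(I)>0$ for all large $k$, because a single distant point, viewed from a suitable direction, projects anywhere (concretely $\alp_k\pm\tfrac\pi2\in I$ already forces $B_k(I,J)$). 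These events are independent with divergent probability sum, so the second Borel--Cantelli lemma makes $B_k(I,J)$ occur infinitely often; intersecting over the countably many pairs $(I,J)$ gives almost sure density of every generating open set, hence $\DEN(M)$ is a dense $G_\delta$ and therefore residual.

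Parts (2) and (3) are governed by the dichotomy of $\sum_k r_k^{-1}$ together with Theorem~\ref{thm:cent}. For a fixed $\alp$: if $\sum_k r_k^{-1}<\infty$ then $\sum_k p_k(J)<\infty$ for every bounded $J$, so the first Borel--Cantelli lemma and a union over rational $J$ give that almost surely $\Phi_\alp(M)$ is discrete; if $\sum_k r_k^{-1}=\infty$ then $\sum_k p_k(J)=\infty$, and the second Borel--Cantelli lemma makes $\Phi_\alp(M)\cap J$ infinite for every rational $J$, so almost surely $\alp\in\DEN(M)$. The sets involved are Borel by Corollary~\ref{cor:borel}, so I would integrate these fixed-$\alp$ statements over $\alp$ by Tonelli: in the summable case $\int_\T\mathbb{P}(\alp\notin\DIS(M))\,d\alp=0$ forces $\lam(\T\minus\DIS(M))=0$ almost surely, i.e. $\DIS(M)=\T$ mod $\lam$; in the divergent case the same computation gives $\DEN(M)=\T$ mod $\lam$. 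The converses are immediate because the two sides of the dichotomy are mutually exclusive and, by Theorem~\ref{thm:cent}, $\DIS(M)$ and $\DEN(M)$ exhaust $\T$ up to a null set; this settles (2) and (3) at once.

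The genuinely harder statement is the pointwise one in (4), which asks that the relevant property hold at \emph{every} direction rather than almost every one. Written on the density side it asks that every direction be dense, i.e. the exact (not merely almost-everywhere) form of (2), and this turns it into a covering problem: a direction fails to be dense precisely when, for some bounded $J$, none of the projections $\Phi_\alp(\bfx_k)$ lands in $J$, i.e. when $\alp$ avoids the random arc system $\{\alp\mid\Phi_\alp(\bfx_k)\in J\}$, whose $k$-th arc has length $\asymp|J|/r_k$ and uniformly distributed centre. Hence the every-direction statement is equivalent to the Dvoretzky--Shepp covering of $\T$ by independent random arcs of length $\varepsilon/r_k$, simultaneously for all $\varepsilon>0$. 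The plan is to apply Shepp's covering criterion $\sum_n n^{-2}\exp\big(\sum_{k\leq n}\varepsilon r_k^{-1}\big)=\infty$ and to verify the elementary equivalence that this holds for every $\varepsilon>0$ if and only if $\limsup_N (\log N)^{-1}\sum_{k\leq N} r_k^{-1}=\infty$, the tail covering event being a zero--one event. The main obstacle lies exactly here: unlike (1)--(3), this cannot be reached by first or second moment Borel--Cantelli arguments, since it concerns the uncountable family of all directions at once; it needs the sharp, constant-sensitive covering threshold and the limiting passage $\varepsilon\downarrow0$ that converts Shepp's series into the stated constant-free $\log N$ criterion.
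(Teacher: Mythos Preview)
Your outline matches the paper's own (very terse) proof: standard Borel--Cantelli for (2) and (3), and reduction of (4) to the Dvoretzky problem on covering the circle by random arcs, resolved by Shepp's criterion. Two remarks.

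\textbf{On (1).} The paper takes a shorter route than yours. It simply observes that the \emph{deterministic} hypothesis ``$\{\alp_k\}$ is dense in $\T$'' already forces $\DEN(M)$ to be residual, and then notes that this hypothesis holds almost surely. Indeed, $\DEN(M)$ is always a $G_\delta$; and if the $\alp_k$ are dense then for any arc $I\subset\T$ and any bounded $J\subset\R$ one can pick $k$ with $\alp_k$ near $I+\tfrac\pi2$ and then adjust $\alp\in I$ so that $r_k\cos(\alp_k-\alp)\in J$, showing each open set $\{\alp:\Phi_\alp(M)\cap J\neq\emptyset\}$ is dense. Your Borel--Cantelli argument is correct but is slightly more work than needed.

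\textbf{On (4).} You read the statement as the pointwise strengthening of (2), namely $\DEN(M)=\T$ a.s., whereas the paper literally writes $\DIS(M)=\T$. Your reading is the internally consistent one: the displayed $\limsup$ condition forces $\sum_k r_k^{-1}=\infty$, which by (3) already rules out $\DIS(M)=\T\pmod\lam$, let alone $\DIS(M)=\T$. It is also the reading that fits the paper's own proof, which compares (4) with (2) and invokes Shepp's covering theorem. Your translation of ``$\DEN(M)=\T$ for all $\alp$'' into ``the random arcs $\{\alp:\Phi_\alp(\bfx_k)\in J\}$ of length $\asymp|J|/r_k$ cover $\T$, simultaneously for all rational $J$'' is exactly the intended reduction, and your equivalence between ``Shepp's series diverges for every $\varepsilon>0$'' and $\limsup_N(\log N)^{-1}\sum_{k\le N}r_k^{-1}=\infty$ is the right calculation.
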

\begin{proof}
Statesments (2) and (3)  are obtained by standard application of Borel-Cantelli 
lemma. One also verifies that $\DEN(M)$ is a residual subset of\, $\T=[0,2\pi)$ 
whenever $\boldsymbol \alp$ is dense in $\T$, an a.\,s. condition. This proves (1).

Statement (4) is more delicate. Some readers may find it surprising that the 
conditions on sequence $\bfr=\{r_k\}$ for (2) and (4) are not equivalent. The 
situation here is reminiscent to Dvoretzky's problem on covering the circle by
random arcs (see \cite[Ch. 11]{Ka}, \cite{Shepp} for the description of the problem
and its solution by L.\ Shepp).
Statement (4) can be derived from the solution of Dvoretzky's problem.\\[-5mm]

\end{proof}
\section{Concluding remarks}

The main result of the paper (Theorem \ref{thm:cent}) was inspired by a conversation
with Hillel Furstenberg in~1993.  
A shorter version of the present work was circulating
as an unpublished preprint of 1994.

I would like to thank Benjy Weiss and Hillel Furstenberg for useful discussions
(conducted years ago) and also Yuval Peres for his encouragement to publish 
the results of this paper.
(His 2000 paper \cite{PS}, joint with  Boris Solomyak, refers to the
unpublished preprint mentioned above).

\vsp{22}

\end{document}